\pdfoutput=1
\documentclass[paper=letter, fontsize=10pt,leqno]{scrartcl}
\usepackage[body={5.35in,7.5in}]{geometry} 

\usepackage[english]{babel}				
\usepackage{amsthm}
\usepackage{hyperref}

 \usepackage{verbatim}
\usepackage[pdftex]{graphicx}				
\usepackage{url}
\usepackage{amssymb}
\usepackage{bbm}
\usepackage{MnSymbol}

       \usepackage{dutchcal}

\usepackage[pdftex]{color}

\usepackage{amscd}
\usepackage[format=plain]{caption}
\usepackage{wrapfig}
 \usepackage{float}
 
 \usepackage{bigints}

\usepackage{sectsty}					
\allsectionsfont{\centering \normalfont\scshape}	

\usepackage{mathtools}
\DeclareCaptionStyle{ruled}{labelfont=bf,labelsep=space}
\usepackage[]{forest}

\textheight 7.5in

\newtheorem{theorem}{Theorem}

\newtheorem{definition}[theorem]{Definition}
\newtheorem{proposition}[theorem]{Proposition}
\newtheorem{lemma}[theorem]{Lemma}

\newtheorem{example}[theorem]{Example}

\newtheorem*{proposition*}{Proposition}


\title{
		\vspace{-1in} 	
		\usefont{OT1}{bch}{b}{n}
		\normalfont \normalsize \textsc{} \\ [25pt]
		\huge  Rolling systems and their billiard limits
}

\date{}

\author{\normalfont \large 
\ C. Cox\footnotemark[2] \footnote{\scriptsize Department of Mathematics, University of Delaware, Ewing Hall, Newark, DE 19711 },
\  R. Feres\footnote{\scriptsize Department of Mathematics and Statistics, Washington University, Campus Box 1146, St. Louis, MO 63130},
\ B. Zhao\footnotemark[2]
}

 \date{\today}

\begin{document}

\maketitle

\begin{abstract}
\begin{center}
 Abstract \end{center}
{\small  
Billiard systems,
broadly speaking, may be regarded as models of mechanical systems in which rigid parts interact through elastic impulsive (collision) forces.
When it is desired or necessary to  account for linear/angular momentum exchange in collisions involving a spherical body, a type of billiard system often referred to
as {\em no-slip} has  been used. In recent work, it has become apparent that no-slip billiards resemble      non-holonomic mechanical systems in a number of ways. Based on an idea by Borisov, Kilin and Mamaev, we show  that no-slip billiards very generally arise as  limits of non-holonomic (rolling) systems, in a way that is akin to how ordinary billiards arise as limits of geodesic flows through a flattening  of the Riemannian manifold. 
}
\end{abstract}

\section{Introduction}
\label{introduction}
Billiard systems are broadly conceived as a class of mechanical/geometric dynamical systems whose trajectories consist of  continuous,  piecewise free  (or geodesic) motion in the interior of  some configuration  manifold  $M$ (a manifold with boundary and possibly corners), with    instantaneous change of direction due to impulsive forces where the geodesic segments meet the boundary of  $M$. For systems consisting of rigid parts, or masses, these discontinuous changes  result from {\em collisions} between  the parts.
The term {\em geodesic motion} is understood  with respect to  the Riemannian metric on $M$ derived from the system's kinetic energy, hence from a
given distribution of masses.  
Boundary points represent configurations in  which some of the rigid parts are in   contact. 
The  change in velocities, from those immediately prior to a collision event to those immediately after,  is typically  implemented by a linear map defined on the tangent 
space of $M$ at the corresponding boundary point. We call it the {\em collision map} at that point.

For the standard sort of mathematical  billiards, the collision map at a point $q\in \partial M$  is a linear isometry, thus kinetic energy preserving,
 that leaves invariant     the tangential component of  the pre-collision velocity $v\in T_qM$ and flips the sign of the normal component\----that is to say, a specular reflection. The latter condition (of conservation of the tangential component of $v$)   may be interpreted physically as resulting from perfectly slippery contact between the moving parts of the system,  not allowing  for momentum transfer between bodies tangentially to a plane of contact.  
This contact condition does not follow from standard mechanical assumptions involving  conservation laws. In fact, a classification 
of collision maps under the assumptions  of energy conservation, time reversibility, linear and angular momentum conservation (when the appropriate Euclidean symmetries are not broken; they are violated, for example, when one body is held in place), and the very restrictive and natural assumption, typically made implicitly, that the impulsive forces between parts of the system  can only 
act at  the points of contact, was obtained in \cite{CF}. This classification shows  that the space of collision maps can be parametrized, in arbitrary dimensions, by a kind of real Grassmannian variety.  Except in dimension  one, this variety  does not reduce to a single point. For example, in dimension $2$, there are exactly two possibilities: the standard specular reflection and what we have called {\em no-slip} reflection. In higher dimensions, the map is specified by the subspace (in the boundary of $M$) of {\em no-slip contact}, that is, 
the eigenspace of the collision map associated to the eigenvalue $-1$. (Time reversibility implies that the collision map is an involution, hence it can only have eigenvalues $1$ and $-1$.)  In general, we define {\em no-slip} billiards as those  for which the dimension of this subspace of no-slip contact is maximal. 
An explicit description of the no-slip collision map will be given later in this paper. (See Equations (\ref{no-slip collision map}) and (\ref{transformation no-slip}).)

For our main result, showing that no-slip billiards arise as a limit of a rolling system, we   will  take  $M$ to consist of the configurations
of a ball in    the closure of  a domain $P$ (a non-empty, connected open subset) in Euclidean space with piecewise smooth boundary. 
  By a {\em no-slip billiard} in $P$ we will understand  the system in which the ball moves freely inside   $P$
  and undergoes a velocity change at  the boundary according to the collision maps  defined  by Equation (\ref{transformation no-slip}). (The physical/geometric justification of the choice of collision map can be found in \cite{CF}.)  These systems have been used in the past, typically in dimension $2$,     whenever there is the need to account  for the change in angular velocity of
  a spherical body at collisions. (The term {\em rough collisions} instead of {\em no-slip} is also  used.)
  See, for example, \cite{bgutkin,CCCF,CFII,CFZ,Garwin,hefner,LLM,MLL,W}. (In his 1969 paper \cite{Garwin}, {\em Kinematics of an Ultraelastic Rough Ball},   the celebrated physicist Richard Garwin uses the no-slip model to explain the odd bouncing  behavior of a Wham-O Super Ball${}^\copyright$  thrown under a table.) A more systematic study, from a dynamical systems theory viewpoint,  was initiated in \cite{CFZ}, and in \cite{CCCF} it is shown that no-slip billiards share many properties with non-holonomic systems, although a direct connection between no-slip billiards and non-holonomic systems was not made there. More specifically, it is shown in \cite{CCCF} that no-slip billiard systems,
   possibly under the action of a constant external (gravity) force, behave for certain initial conditions like a rolling system in the limit  as the duration of 
the between-collisions steps approaches zero. Thus, in \cite{CCCF}, one obtains in a very special context non-holonomic (rolling) systems from no-slip billiards, whereas here we do the opposite.

  In \cite{BKM}, the authors  derive via a limiting procedure the no-slip billiard reflection map in disc and infinite strip domains  in the plane  from the rolling motion  of a ball on elliptic cylinders and ellipsoids in $\mathbb{R}^3$. Thus they  establish a direct link between no-slip billiards and non-holonomic systems, albeit for very special  shapes of the billiard table.
  The resulting  system is called in \cite{BKM} {\em non-holonomic billiards}.
 It is then 
 natural  to ask  whether 
  no-slip billiards can be obtained from rolling systems in greater generality. 
  Notice that this correspondence between rolling on surfaces and no-slip billiards on  domains in the plane   is similar to the relation between 
  geodesic flows on surfaces and 
   ordinary billiard systems  on domains obtained by a flattening of the surface. (An early  mention of this relation is  \cite{A}, page 184; see also
   \cite{K}. This fact  
       is also a corollary of  Theorem \ref{rolling limit}.)

The present paper shows that no-slip billiards arise from  such  rolling    on submanifolds of  Euclidean space under very general conditions. The limit procedure we adopt  is different from the one used in   \cite{BKM}.  It will become apparent  that these rolling systems (not only for domains in $\mathbb{R}^n$, but for fairly general submanifolds of Euclidean space)   define one-parameter deformations of geodesic flows that depend on both the intrinsic and extrinsic submanifold geometry,   the deformation parameter being   the rolling ball's moment of inertia. 
The no-slip billiard system appears in the limit of the rolling motion as the radius of the  ball approaches $0$.  We call these deformations of geodesic flows {\em rolling flows} on {\em pancake manifolds}. (They should not be confused with the {\em non-holonomic geodesic flows} considered, for example, in  \cite{Arnold}.)

 \begin{figure}[htbp]
\begin{center}
\includegraphics[width=3.0in]{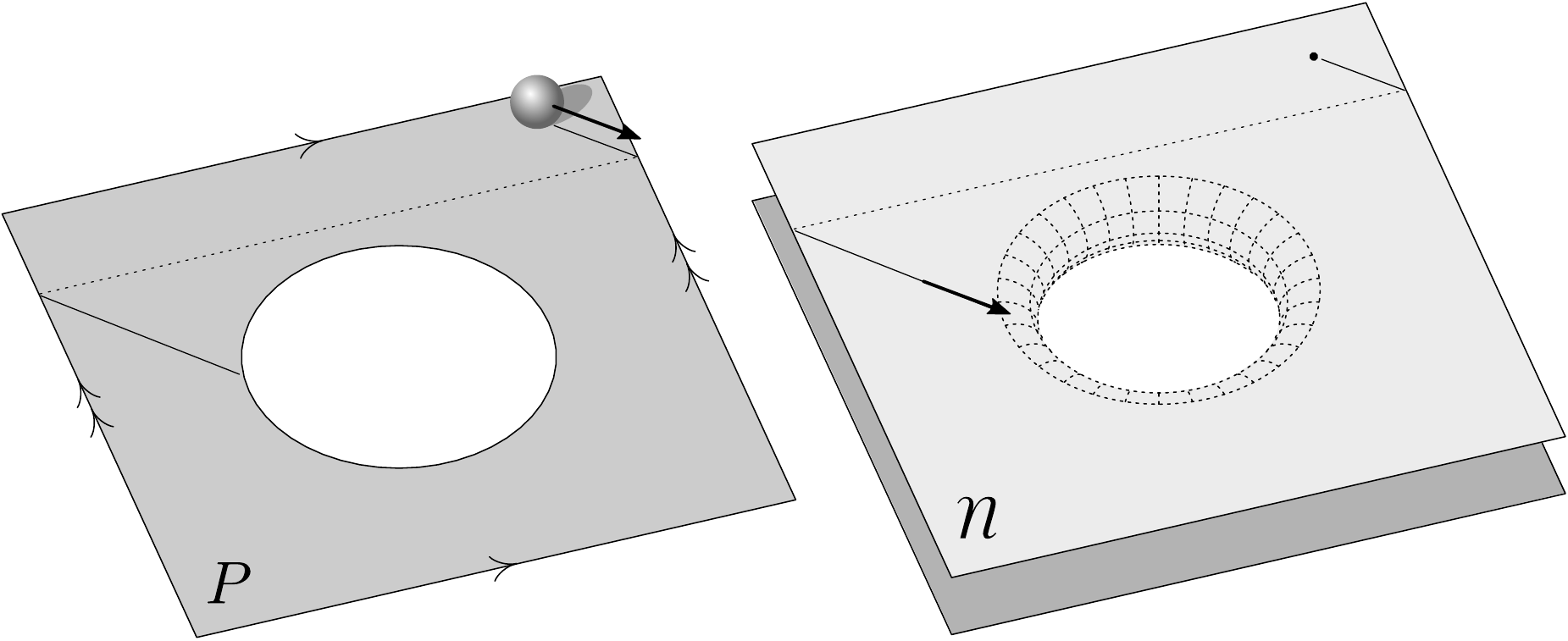}\ \ 
\caption{{\small Rolling system on a Sinai billiard plate and the corresponding pancake surface $\mathcal{N}$, which is the locus of the center of the
rolling ball of radius $r$. }}
\label{sinai pancake}
\end{center}
\end{figure}  

Prior to properly stating the   main result, and in the way of motivating the subject, it may  be helpful to have in mind a couple of concrete examples of rolling systems on a pancake surface.   
 Let us consider first the system defined by a {\em Sinai billiard} plate $P$. (Figure \ref{sinai pancake}.) This is a submanifold of $\mathbb{T}^2\times \mathbb{R}$ (the first factor  being the flat $2$-torus) rather than Euclidean space, but our subsequent discussion is easily adapted to this case.   $P$ is a $2$-torus  with a 
 disc-shaped hole in the middle. 
  The  ball is constrained to  move so as not to lose  contact with $P$ and to roll on it without slipping, which means that
 the point on the ball in contact with the plate at any given moment has zero velocity. No forces, such as gravity, are taken into account except  those enforcing the constraints. The associated pancake surface is the  boundary of the region in $\mathbb{T}^2\times \mathbb{R}$
 consisting of points that lie at a distance no greater than $r$ from $P$. This  surface, denoted $\mathcal{N}$, is differentiably embedded if $r$ is sufficiently small. ($\mathcal{N}$ fails to be smooth on the pair of circles separating the flat and negatively curved regions.)

 A  {\em state} of the system at  a given moment of time  is  the set of positions and velocities that uniquely specifies a trajectory through an initial value problem for Newton's differential equation.
 Each state  consists of the position of the center of the ball (a point in $\mathcal{N}$) and three velocity components: two for
 the velocity of the center of mass, which we call the {\em center velocity} (we always assume that the mass distribution is rotationally symmetric) and one for the angular velocity component
 about the unit normal vector $\nu$ to $\mathcal{N}$ (pointing out), which we call the ball's {\em tangential spin}, or simply {\em spin}. The other  angular velocity components are not independent due to the no-slip rolling constraint.  These three velocity components are further restricted by conservation of kinetic energy.  In higher dimensions, the tangential spin will be a skew-symmetric linear map on $T_x\mathcal{N}$.

 Let us call $u\in T_x\mathcal{N}$ the center velocity vector, $\mathcal{s}$ the spin, $\alpha$ the area $2$-form and $J$ the standard complex structure 
 on $\mathcal{N}$. Thus $J_x$, at each $x\in \mathcal{N}$, is the positive  rotation by $\pi/2$ (relative to the choice of orientation set by $\nu$). As we will see,  the equations of motion when the ambient space is $3$-dimensional Euclidean space
 can be written as
 \begin{equation}\label{roll eq dim 3}\frac{\nabla u}{dt} = -\eta \mathcal{s} J\mathbb{S}_x u, \ \ \dot{\mathcal{s}} = \eta \alpha\!\left(\mathbb{S}_xu,u\right) 
 \end{equation}
 Here $\mathbb{S}_x:T_x\mathcal{N}\rightarrow T_x\mathcal{N}$ is the shape operator of $\mathcal{N}$ at $x$ and $\nabla$ is the surface's Levi-Civita connection.  The parameter $\eta\in [0,1)$ is a function of the ball's moment of inertia. When $\eta=0$, the tangential spin $\mathcal{s}$ is constant and
 the center follows a path with zero acceleration ($\nabla u/dt=0$), which is to say, a geodesic path.

   \begin{figure}[htbp]
\begin{center}
\includegraphics[width=3.0in]{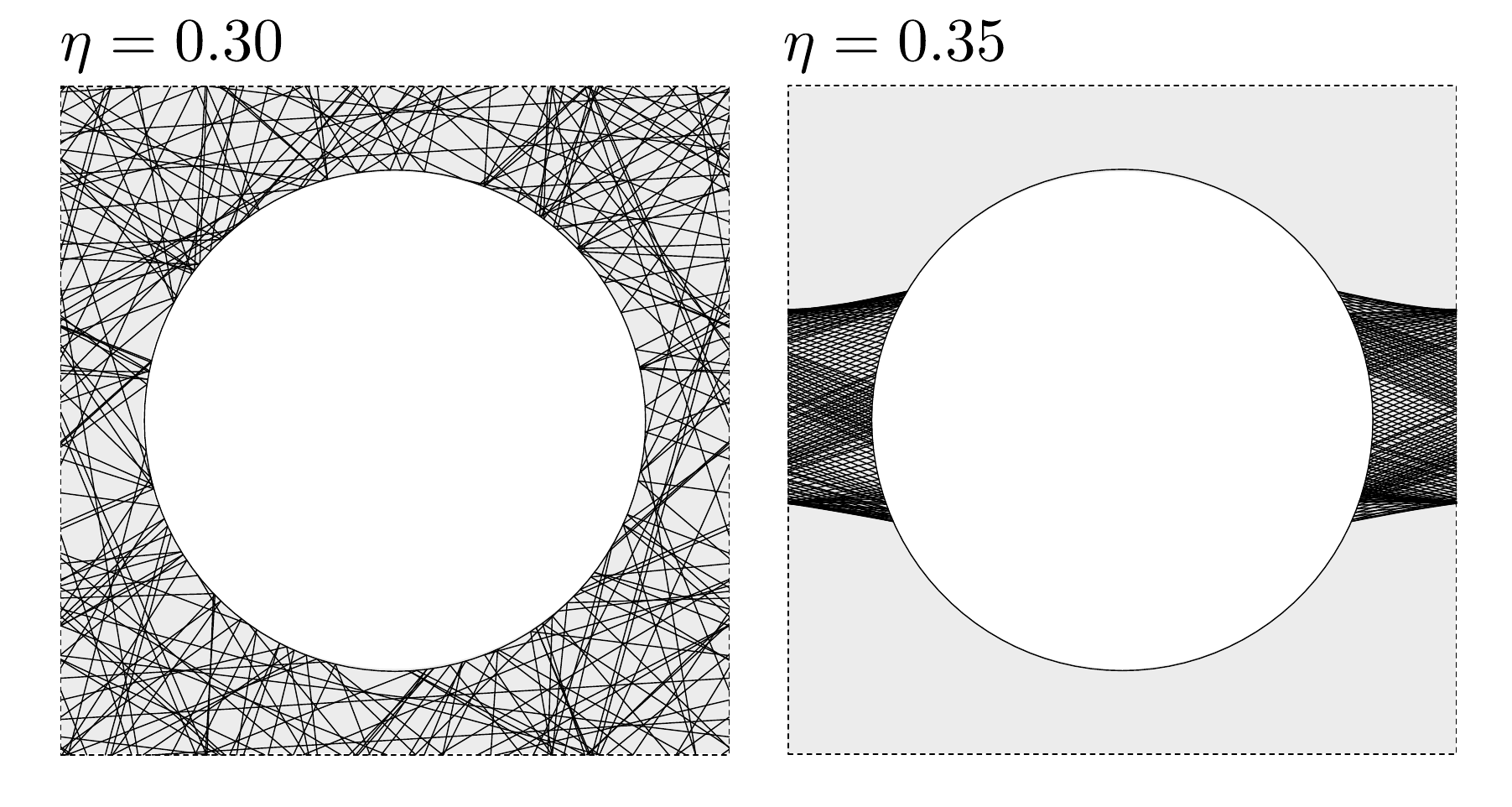}\ \ 
\caption{{\small Two trajectories of the Sinai rolling system with the same initial conditions but different values of the moment of inertia parameter $\eta$. }}
\label{sinai orbits}
\end{center}
\end{figure}

 Figure \ref{sinai orbits} shows  sample trajectories of the center of the ball  for two different values of $\eta$. (The   radius of the ball in this example is too  small for the rounded edge of the circular hole  to  be noticeable in the figure.) It is interesting to note that, for sufficiently small $\eta$, trajectories appear chaotic, which is to be expected since the  geodesic flow on such a non-positively curved surface is known to be chaotic due to the classical work by Y. Sinai. But for sufficiently large $\eta$ the system shows strong stability (elliptic behavior). Results from
 \cite{CFZ} suggest that this system is never ergodic when $\eta>0$, something we cannot prove at this moment.

 Another numerical example is illustrated in Figure \ref{pancake disc}. The flat plate  $P$ is now a disc in $\mathbb{R}^3$ and $\mathcal{N}$ is 
 a circular pancake with width $2r$, for a small value of $r$. When the moment of inertia parameter $\eta$ is $0$, trajectories of the center of the rolling ball
 (viewed from above so that the two flat sheets of $\mathcal{N}$ are seen as one) are nearly the same as the  trajectories for the ordinary billiard on a disc and exhibit  the characteristic caustic circle. For a positive $\eta$, caustics split into two concentric  circles as seen on the right-hand side of Figure \ref{pancake disc}.  This property of the rolling flow in circular plates  is easy to establish in the limit as the radius of the ball is sent to $0$, but it is a more challenging problem, and a topic for future work, to ascertain it for a  positive radius.

  \begin{figure}[htbp]
\begin{center}
\includegraphics[width=3.0in]{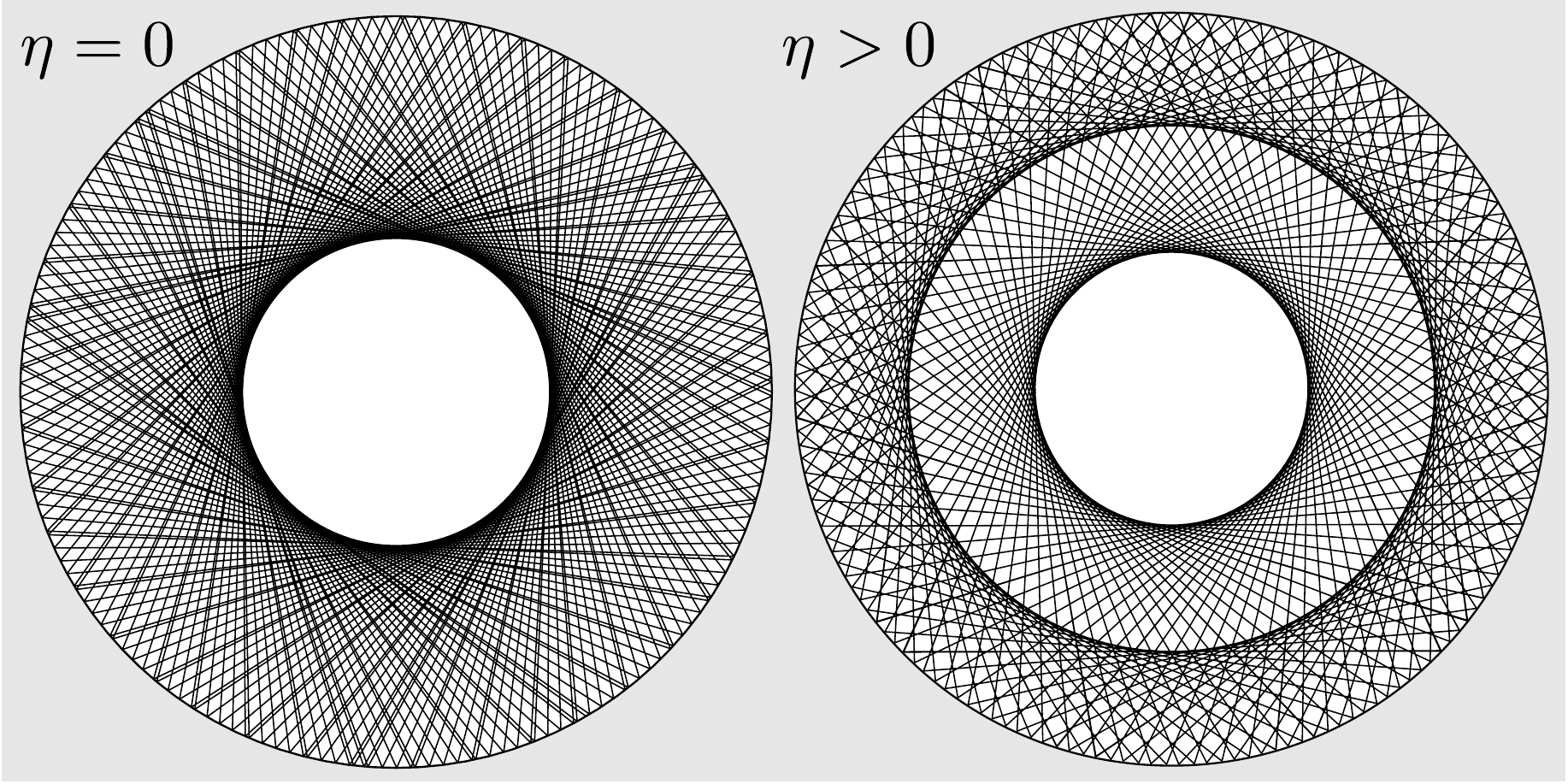}\ \ 
\caption{{\small  Rolling on a disc shaped plate.}}
\label{pancake disc}
\end{center}
\end{figure}  

The rest of this paper is organized as follows. In Section \ref{limit section} we define the no-slip collision map, rolling billiards, and state the result that
the former arises from the latter in the limit as the radius of the rolling ball approaches $0$. (Theorem \ref{rolling limit}.) In Section \ref{flows and volume invariance}  we describe the equations of the rolling flow.
In Section \ref{elementary examples} we give a few examples of rolling systems that can be solved analytically. They all correspond
to  rolling on $P=\mathbb{R}^k$ inside $\mathbb{R}^m$ for different codimensions.
 One of this class of examples (rolling on $\mathbb{R}^k$ inside $\mathbb{R}^{k+2}$) is needed in the proof of the billiard limit result.   The  interest in these
examples lies in   that they provide building blocks for rolling systems on polygonal and polyhedral regions, a topic we plan to return to in a future work.  
 (See \cite{CFZ} for a study of stability properties of no-slip billiards on polygons;  due to Theorem \ref{rolling limit}, results in that paper immediately apply
 to rolling billiards, but the case of positive radius is more challenging.) Section \ref{derivation of newton equation} reviews the derivation of Newton's equation for the non-holonomic system of a ball rolling without slip on   submanifolds of  $\mathbb{R}^m$, and expresses the equations of motion 
 in a form that makes the connection with geodesic flows more explicit. Briefly, the geodesic flow of a Riemannian manifold $\mathcal{N}$ is a flow on the tangent bundle $T\mathcal{N}$;
 the {\em rolling flow} is a flow on    $T\mathcal{N}\oplus \mathfrak{so}(\mathcal{N})$, where $ \mathfrak{so}(\mathcal{N})$ is the vector  bundle   whose elements at a base point $x$ are the skew-symmetric linear maps  from $T_x\mathcal{N}$ to itself. We call the elements of $ \mathfrak{so}(\mathcal{N})$ the 
 {\em tangential spin}.
 When the moment of inertia parameter $\eta$ is zero, the rolling flow reduces to geodesic flow on $T\mathcal{N}$ while elements of $ \mathfrak{so}(\mathcal{N})$ are parallel transported along geodesic flow lines. 
  Finally, in Section \ref{bill limit section} we prove Theorem \ref{rolling limit}. The case $\eta=0$ may be contrasted with the so-called {\em  frame flows}.
  These are flows on the orthogonal frame bundle of a Riemannian manifold whose flow lines project onto geodesics on the manifold and frames
  are parallel transported along these geodesics.  See, for example, \cite{B} and \cite{BG}. What we call tangential spin may  be roughly associated with  the rate of rotation of a moving frame. 
  
  From a dynamical systems and ergodic theory perspective, there are many natural questions one can ask about rolling flows, motivated by 
  related issues  for no-slip billiards and geodesic flows. As already noted, establishing similar results as those from \cite{CFZ} (concerning no-slip billiards on
  polygons) would likely  lead to useful insights, as would understanding the breakdown of ergodicity for sufficiently large moment of inertia parameter $\eta$ for deformations
  of hyperbolic geodesic flows, or the effect of the same parameter on Lyapunov exponents and the metric entropy.  A first step in taking the subject along this direction is to investigate whether the canonical (Liouville) volume form
  defined on the system's phase space   is invariant under the rolling flow.  This is true in dimension $3$ (we do not prove this here) but the general dimension case is work in progress. 
 
  \section{Rolling billiards and no-slip billiards} 
  \label{limit section}
  Before stating the  main result of the paper, which establishes the equivalence between rolling and no-slip billiards, we need to introduce the essential definitions. We begin with a characterization of the no-slip collision map. (For more details, see \cite{CF} and \cite{CCCF}.)

\subsection{No-slip billiards}
Let $P$ be the closure of a domain in $\mathbb{R}^k$.  The center of a  ball of radius $r$ contained in $P$ must clearly be at a distance at least $r$ from
$\partial P$. To avoid adding more notation, we deviate slightly from above  description  in Section \ref{introduction} and assume from now on that $P$ is the locus of centers of the ball rather than the full billiard table. The actual billiard table is then the union of  $P$ and the tubular neighborhood of its boundary $\partial P$. We call this union the {\em extended billiard table}. 
We assume
that $P$  has piecewise smooth boundary. Let $\mathbbm{n}(x)$ be the inward pointing unit normal vector  defined at a point $x$ on 
the smooth part of the boundary. 

The configuration manifold of a ball moving freely in the interior of $P$ (no non-holonomic constraints here!) is the subset $M=\{(x,A)\in SE(k): x\in P\}$
of the Euclidean group $SE(k)$. Here we are denoting by $(x,A)$ elements of the Euclidean group, where $x$ is the  translation part and $A$ the rotation.
The boundary of  $M$ consists of the elements $q=(x,A)$ where $x$ lies on the boundary of $P$. 
A subbundle $\mathfrak{R}$ of the tangent bundle to the boundary of $M$ can be defined by the linear condition that a vector $v\in \mathfrak{R}_{q}$ describes a state in which the point on the ball's surface in contact with the boundary of the extended billiard table has zero velocity.  
We denote tangent vectors to $M$ by $(u,S)\in T_xP\times \mathfrak{so}(k)$, where $u$ is the center velocity and $S=\dot{A}A^{-1}$ is the angular velocity matrix, which lies in the Lie algebra $\mathfrak{so}(k)$ of the rotation group $SO(k)$. 
 The kinetic energy Riemannian metric on $M$ has the following form. Let $\xi = (u_\xi, S_\xi), \zeta=(u_\zeta, S_\zeta)$ be tangent to $M$ at $(x, A)$; then
 $$\langle \xi,\zeta\rangle = \mathcal{m}\left\{\frac{\left(r\gamma\right)^2}{2}\text{Tr}\left(S_\xi S_\zeta^\intercal\right) + u_\xi\cdot u_\zeta\right\}, $$
where
$u_\xi\cdot u_\zeta$ stands for the ordinary dot product, 
 $\mathcal{m}$ is the total mass of the ball, and $\gamma$ is a parameter related to moment of inertia (obtained from the matrix of second moments of
the mass distribution, which must be a scalar matrix  under the assumption that this distribution is rotationally symmetric). The parameter $\eta$ mentioned earlier
is $\eta=\gamma/\sqrt{1+\gamma^2}$. Both $\gamma$ and $\eta$ are independent of the radius $r$. 
 It is also useful to introduce the numbers
$$c_\beta:= \cos\beta :=\frac{1-\gamma^2}{1+\gamma^2}, \ \ 
    s_\beta:= \sin\beta  :=\frac{2\gamma}{1+\gamma^2}.$$

The collision map at a boundary point $q=(x,A)$ of $M$ is a linear map $C_q: T_qM\rightarrow T_qM$ that sends vectors pointing out of $M$ to vectors pointing into it and satisfies certain natural requirements listed in \cite{CF}. 
Those requirements imply that the restriction of $C_q$ to $\mathfrak{R}_q$ must be the identity.
  Among the possible choices reflecting the nature of
contact between the ball and the boundary of $P$ (whether it is ``slippery'' or ``rough''), we call {\em no-slip} that choice which  makes contact
maximally rough.  Mathematically, this means that the restriction of  $C_q$  to the orthogonal complement of $\mathfrak{R}_q$ (with respect to the above Riemannian metric) is minus the identity map.
We refer the reader to \cite{CF} and \cite{CCCF} for further explanations.   The  no-slip collision map can now  be expressed as follows (see Proposition 15 of
\cite{CCCF}):
\begin{equation}\label{no-slip collision map} C_q(u,{S})=\left(  
c_\beta u - \frac{s_\beta}{\gamma} (u\cdot \mathbbm{n}(x))\mathbbm{n}(x) + s_\beta \gamma r S \mathbbm{n}(x),  {S}+ \frac{s_\beta}{\gamma r}\mathbbm{n}(x)\wedge \left[u - r S\mathbbm{n}(x)\right]
\right).\end{equation}
In this expression, $\wedge$ stands for the 
 {\em cross-product} $(a,b)\in \mathbb{R}^k\times  \mathbb{R}^k\mapsto  a\wedge b\in
\mathfrak{so}(k)$, defined by $u\mapsto (a\wedge b)u=(a\cdot u)b-(b\cdot u)a$ and $\mathbbm{n}(x)$, we recall,  is the inward pointing unit normal vector to 
the boundary of $P$ at $x$. 

A minor change in notation will help make the expression on the right-hand side of  Equation (\ref{no-slip collision map}) a little more transparent. We write
$\mathcal{S}=r\gamma S$, so the Riemannian metric simplifies to 
$$\langle \xi,\zeta\rangle = \mathcal{m}\left\{\frac{1}{2}\text{Tr}\left(\mathcal{S}_\xi \mathcal{S}_\zeta^\intercal\right) + u_\xi\cdot u_\zeta\right\}.$$
We also write $W:=\mathcal{S}\mathbbm{n}(x)$
and denote by $\Pi_x$ the orthogonal projection from $\mathbb{R}^k$ to the tangent space of the boundary of $P$  at $x\in \partial P$. Elements of $\mathfrak{so}(k)$ may be decomposed as 
$$\mathcal{S}= \Pi_x\mathcal{S}\Pi_x + \mathbbm{n}(x) \wedge W$$ 
as an elementary calculation can show.  (See Lemma \ref{lemma decomposition}.)   Then the effect of $C_q$ is to map 
 \begin{equation} \label{transformation no-slip}  \Pi_x \mathcal{S}\Pi_x\mapsto \Pi_x \mathcal{S}\Pi_x, \ \   \mathbbm{n}(x)\mapsto -\mathbbm{n}(x),  \ \ \left(\begin{array}{c} \bar{u} \\W\end{array}\right) \mapsto \left(\begin{array}{cr}c_\beta I & s_\beta I \\s_\beta I & -c_\beta I\end{array}\right) \left(\begin{array}{c}\bar{u} \\W\end{array}\right),\end{equation}
where $\bar{u}=\Pi_x u$ and $I$ is the identity map on the tangent space to the boundary of $P$ at $x$. Note that $W=\mathcal{S}\mathbbm{n}(x)$ is in this
tangent space since $\mathcal{S}$ is skew-symmetric.  The orthogonal transformation involving $\bar{u}$ and $W$ given by (\ref{transformation no-slip})  is the characteristic exchange of linear and angular 
velocities of no-slip collisions.  

 We can now define no-slip billiards as the system whose orbits in the interior of $P$ consist of  straight line segments with constant $u$ and constant $\mathcal{S}$ (as justified in  \cite{CF}), and at the boundary undergoes a change of  velocities according to the above collision map $C_q$. When the mass distribution of the ball
 is entirely concentrated at the center, $\gamma=0$ and the collision map reduces to a transformation that decouples linear and angular velocities:
 the center velocity $u$ transforms according to the standard billiard reflection, and the components of the angular velocity contained in $W$ switch sign while the other components remain the same.
 
 Many of the concepts introduce above for no-slip billiards will have their counterparts for rolling billiards. They  will be distinguished  by adding a
 superscript. Thus, for example, we  write $C^b$ and $C^r$, $\gamma^b$ and $\gamma^r$, $W^b$ and $W^r$, and so on. 
 (Here $r$ stands for ``rolling'' and $b$ for ``billiard.'')
 We omit the superscript when the context makes it clear to which type of system  we are referring.

 \subsection{Rolling billiards}
 In order to define rolling billiards  on the $k$-dimensional  $P$   we now regard $P$ as a  flat submanifold of $\mathbb{R}^{k+1}$ (so it is still a domain in $\mathbb{R}^k\subset \mathbb{R}^{k+1}$).   For the rolling to be well defined, we assume (for a given $r>0$) that the boundary $\mathcal{N}$ of the set of points at a distance no greater than $r$ from $P$  has a continuous and piecewise smooth unit normal  vector field. 
 Elements of $\mathbb{R}^k$ will consist of $(k+1)$-tuples with the last component equal to $0$. We wish to recover the no-slip billiard collision map
 in the limit as the radius of a ball rolling around the edge of the plate $P$ goes to zero.

  One point to bear in mind is that the behavior of the rolling ball at the edge of $P$ is not easy to describe explicitly when
 the boundary of $P$ has non-zero curvature. For example, it is perfectly possible for the ball to roll on the edge of $P$ 
 only part of the way, turn around,  and then return to the same side from which it first arrived at the edge. (These two sides correspond to the
 two flat sheets comprising the pancake hypersurface $\mathcal{N}$. See Figure \ref{sinai pancake}.) The ball can also 
 roll on the edge of $P$ for an indefinite amount of time before finally exiting on one of the two sides. In the limit, however, the motion is greatly 
 simplified and the ball necessarily rolls
 to the opposite side of $P$ in a relatively simple motion that can be described analytically.

The notation in the rolling ball set-up is similar to what we used above for the no-slip billiard, but we need to be attentive to the new context. The rolling ball is
 now  $(k+1)$-dimensional while the no-slip billiard ball is  $k$-dimensional. 
 The unit vector $\mathbbm{n}(x)$ is, as before: the inward pointing unit normal vector at a boundary point $x$. Let $U\in \mathfrak{so}(k+1)$ represent the angular velocity matrix of the rolling ball.  Let  $\Pi^\mathcal{N}_x$, $x\in P$, be the orthogonal projection from $\mathbb{R}^{k+1}$ to $T_x\mathcal{N}$. As before, $\Pi_x$ is the orthogonal projection to the tangent space to  $\partial P$ at  a boundary point $x$.
 The parameters $\gamma$ and $\eta$ are defined just as in the no-slip billiard setting, but now they are associated to the mass distribution of  a higher dimensional ball. They will be given superscripts, $\gamma^r, \eta^r$ when we need to make direct comparisons with the corresponding $\gamma^b, \eta^b$.  
 We define $$S=\Pi^\mathcal{N}_x U\Pi^\mathcal{N}_x \ (x\in P), \ \ \mathcal{S}=r\eta S, \ \ W=\mathcal{S}\mathbbm{n}(x)\ (x\in \partial P).$$ Note that  $\mathcal{S}^b=r\gamma^b S^b$ while $\mathcal{S}^r = r\eta^r S^r$. Finally, just as before, $\bar{u}=\Pi_x u$. We call $\mathcal{S}$
 the ball's {\em tangential spin}. We take as a standing assumption that the principal curvatures of the boundary of $P$ are uniformly bounded.

 \begin{theorem} \label{rolling limit}
 In the limit as the radius of the rolling ball  goes to $0$, the velocity components of the  ball immediately before and immediately after rounding the edge of the flat plate $P$  at a boundary point  $x$, are
 related by the   linear map
 $$\Pi_x \mathcal{S}\Pi_x\mapsto \Pi_x \mathcal{S}\Pi_x, \ \   \mathbbm{n}(x)\mapsto -\mathbbm{n}(x),  \ \ \left(\begin{array}{c} \bar{u} \\W\end{array}\right) \mapsto \left(\begin{array}{cr}c & s \\s & -c\end{array}\right) \left(\begin{array}{c}\bar{u} \\W\end{array}\right)$$
 where $s=\sin(\pi\eta)$ and $c=\cos(\pi\eta)$.
 \end{theorem}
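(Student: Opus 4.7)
The plan is to reduce the theorem, in the limit $r\to 0$, to an explicit integration of the rolling ODE on a flat cylindrical local model. I would first localize at a smooth boundary point $x_0\in\partial P$ and choose affine coordinates in which $P$ coincides with the half-space $\{y\geq 0\}\subset\mathbb{R}^k\subset\mathbb{R}^{k+1}$, with $\mathbbm{n}(x_0)=\hat{y}$ and the extra Euclidean coordinate $z$ transverse to $P$. In this chart the pancake $\mathcal{N}$ has the local product form $\mathbb{R}^{k-1}_{x}\times C_r$, where $C_r\subset\mathbb{R}^2_{(y,z)}$ is a stadium profile: two half-lines $\{y\geq 0,\,z=\pm r\}$ joined by a semicircle $\{y^2+z^2=r^2,\,y\leq 0\}$. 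Because the principal curvatures of $\partial P$ are uniformly bounded while the rounded edge contributes a shape-operator eigenvalue of magnitude $1/r$, replacing $\mathcal{N}$ by this flat model perturbs the dominant curvature term by a relative factor of $r$, which vanishes in the limit; this is precisely the codimension-two setting (rolling on $\mathbb{R}^{k-1}$ inside $\mathbb{R}^{k+1}$) isolated in Section \ref{elementary examples}.

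On the cylindrical region I would work in the parallel orthonormal frame of the intrinsically flat surface $\mathbb{R}^{k-1}\times S^1_r$. Split $u = u_\parallel + u_\phi\,e_\phi$ along the product, and the tangential spin as $\mathcal{S}=\Pi_x\mathcal{S}\Pi_x + \mathbbm{n}(x)\wedge W$. The shape operator annihilates the $\mathbb{R}^{k-1}$ fibers and satisfies $\mathbb{S}e_\phi=(1/r)e_\phi$, so the rolling equations from Section \ref{flows and volume invariance} split: the $\Pi_x\mathcal{S}\Pi_x$ block is parallel-transported and hence constant in this frame, $u_\phi$ is a conserved quantity (by skew-symmetry of $\mathcal{S}$ applied to $\mathbb{S}u\parallel e_\phi$), and the remaining $(k-1)$ pairs $(\bar{u}_i, W_i)$ evolve independently via the same $2\times 2$ system driven by the shape-operator term. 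Reparametrizing by the angle $\phi$, using $d\phi=(u_\phi/r)\,dt$, cancels all explicit $r$-dependence and leaves, for each $i$, the constant-coefficient rotation $\frac{d}{d\phi}\left(\begin{smallmatrix}\bar{u}_i\\W_i\end{smallmatrix}\right) = \eta J\left(\begin{smallmatrix}\bar{u}_i\\W_i\end{smallmatrix}\right)$ with $J$ a fixed skew $2\times 2$ matrix.

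Integrating the $\phi$-equation from $\phi=0$ (top-sheet exit) to $\phi=\pi$ (bottom-sheet entry) yields a rotation by angle $\pi\eta$ in each $(\bar{u}_i,W_i)$ plane. A final frame-matching step then converts this into the transformation claimed in the theorem: the outward normal to $\mathcal{N}$ rotates by $\pi$ in the $(y,z)$-plane while traversing the edge, so the orientation-compatible choice of $e_\phi$ reverses sign when passing from the top sheet's natural frame to the bottom sheet's. This sign flip turns the pure rotation by $\pi\eta$ into the reflection $\left(\begin{smallmatrix}c & s\\s & -c\end{smallmatrix}\right)$ with $c=\cos(\pi\eta)$, $s=\sin(\pi\eta)$; the same frame change accounts for $\mathbbm{n}(x)\mapsto -\mathbbm{n}(x)$, while leaving the transverse block $\Pi_x\mathcal{S}\Pi_x$ invariant.

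The main obstacle is the pathology flagged in the paragraph preceding the theorem: for fixed $r>0$ the trajectory may fail to execute a monotone half-turn and could in principle back-scatter, or orbit the edge indefinitely. I would rule this out by a scaling estimate. With $|u|$ bounded away from zero, $\phi$ evolves at leading rate $|u|/r$, whereas the corrections arising from the true geometry of $\partial P$ and from the $\Pi_x\mathcal{S}\Pi_x$ coupling contribute a rate of only $O(|u|)$, and hence an angular error of $O(r)$ over the $O(r)$ transit time. For sufficiently small $r$, $\phi$ is therefore strictly monotone from $0$ to $\pi$ along any trajectory entering the edge transversely, so the flat-cylinder computation above captures the full $r\to 0$ limit of the edge-crossing dynamics and yields the claimed collision map.
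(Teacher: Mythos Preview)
Your proposal is correct and follows essentially the same approach as the paper: localize at a smooth boundary point, handle the $1/r$ singularity by a time/angle rescaling (the paper uses $\tau=ct/r$ together with a spatial homothety $x\mapsto x/r$, which is equivalent to your reparametrization by $\phi$), reduce to the codimension-$2$ straight-edge model of Example~\ref{cod2}/\ref{around edge}, and invoke boundedness of the principal curvatures of $\partial P$ to show the remaining terms vanish in the limit. Your explicit monotonicity argument in the final paragraph makes precise something the paper leaves implicit in its limit statement for $\bar{u}\cdot E$.
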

 
 This is to say that, after adjusting for the mass distribution, the no-slip billiard collision map can be recovered from the map
 describing  the rolling around the edge  of $P$ in the limit as the radius of the  ball goes to $0$.  The required change of mass distributions amounts 
  to imposing on the  inertia parameters
 the identity
 $$\frac{\gamma^r}{\sqrt{1+\left(\gamma^r\right)^2}} 
 =\frac1\pi \arccos\left(\frac{1-\left(\gamma^b\right)^2}{1+\left(\gamma^b\right)^2}\right).$$
As an example, the uniform distribution in dimension $k+1$ corresponds to 
$\gamma^b= \sqrt{2/(k+3)}.$
 
\section{Rolling flows compared to geodesic flows}\label{flows and volume invariance}
The equations of motion of a rolling ball on a submanifold $P$ with boundary of Euclidean space  will be reviewed 
in Section \ref{derivation of newton equation}. It will be shown there that they can be given a particularly suitable form for the sake of comparison 
with dynamical systems of differential geometric origin such as  geodesic flows, magnetic geodesic  flows, and frame flows. 
In dimension $3$, they   are the Equations (\ref{roll eq dim 3}) shown above in the introduction.
We define  in this section a class of equations 
generalizing those of the rolling ball on submanifolds of Euclidean space with the intent of highlighting  the essential features of these   concrete systems.

Let $\mathcal{N}$ be an oriented  Riemannian manifold without boundary, not necessarily imbedded in Euclidean space. We fix on $\mathcal{N}$ a $(1,1)$ tensor field $\mathbb{S}$
such that $\mathbb{S}_x:T_x\mathcal{N}\rightarrow T_x\mathcal{N}$ is symmetric at all $x\in \mathcal{N}$.  (For hypersurfaces in Euclidean space, the
shape operator is the example of $\mathbb{S}$ of primary interest.) We denote the vector space of  skew-symmetric linear maps from $T_x\mathcal{N}$ to itself  as $\mathfrak{so}_x(\mathcal{N})$, whose elements will often be written as $\mathcal{S}$. 
 The vector space $\mathfrak{so}_x(\mathcal{N})$   is a fiber of the vector bundle
of skew-symmetric maps, which we denote by  $\mathfrak{so}(\mathcal{N}).$  We can now define the vector bundle
$\pi:\mathcal{M}=T\mathcal{N}\oplus \mathfrak{so}(\mathcal{N})\rightarrow \mathcal{N}.$
In rolling systems, elements $(x,u,\mathcal{S})$ of $\mathcal{M}$ represent the state of the rolling ball whose center is at $x\in \mathcal{N}$, having
center velocity $u$ and tangential spin $\mathcal{S}$.

The vector bundle $\mathcal{M}$ is given the following  Riemannian metric.
 If $e_i=(u_i,\mathcal{S}_i)$,  $i=1,2$, lie in the fiber above $x\in \mathcal{N}$,   then (up to an overall multiplicative constant)
$$\langle e_1, e_2\rangle = u_1\cdot u_2  + \frac12 \text{Tr}\left(\mathcal{S}_1\mathcal{S}_2^\intercal\right).$$
 Let $\nabla$ denote the connection on   $\mathcal{M}$ naturally induced from the Levi-Civita connection on $\mathcal{N}$. It can be shown that this is a  metric connection on the Riemannian vector bundle $\mathcal{M}$. It is worth noting that $\mathcal{M}$ may be regarded in a natural way as the tangent bundle of a certain Riemannian manifold, $\mathcal{M}=TM$,   and the connection $\nabla$ on $\mathcal{M}$
 is Riemannian on $M$. It is not clear to us at this moment whether $\nabla$ is the Levi-Civita connection (that is, torsion-free) on $M$. 
 This point is directly related to the question whether the canonical volume on $\mathcal{M}$ is invariant under rolling flow defined below in this section.

\begin{definition}[Generalized rolling equations]\label{GRE} With  the notations just defined,
we introduce the bundle map $f:\mathcal{M}\rightarrow \mathcal{M}$ such that    
$$f(e) = -\eta \left(\mathcal{S}\mathbb{S}_x u, u\wedge \mathbb{S}_xu\right),$$
where $e=(u,\mathcal{S})$. 
The {\em generalized rolling equation} is then   $\frac{\nabla e}{dt} = f(e)$. In terms of the components $u$ and $\mathcal{S}$, this is equivalent to the system
$$\dot{x} = u, \ \ \frac{\nabla u}{dt} = -\eta \mathcal{S}\mathbb{S}_x u, \ \ \frac{\nabla \mathcal{S}}{dt} = \eta (\mathbb{S}_xu) \wedge u. $$
When $\mathcal{N}$ has dimension $2$, we may write $\mathcal{S}=\mathcal{s}J_x$, where $J$ is the complex structure on $\mathcal{N}$ (that is, the
orthogonal transformation of positive rotation by $\pi/2$ for the given orientation of $\mathcal{N}$). The rolling equations then reduce to Equations (\ref{roll eq dim 3}).
 
\end{definition}

The derivation of the equations of motion of the rolling ball (Newton's equations) will be reviewed in Section \ref{derivation of newton equation}. Their derivation
is based on the Lagrange-d'Alembert principle as described in \cite{bloch}.
In the following theorem,  $P$ is a submanifold of Euclidean space of arbitrary codimension and piecewise smooth boundary. The only requirement on 
$P$ is that, for a given $r>0$, the corresponding hypersurface $\mathcal{N}=\mathcal{N}(r)$ of points at a distance $r$ from $P$ has a well defined 
shape operator at each of its points. 
\begin{theorem}[Newton's equation for the rolling ball system]\label{Newton}
When $\mathcal{N}$ is the hypersurface associated to the rolling ball on a submanifold $P$ of Euclidean space,  $\mathbb{S}$ is the shape operator on $\mathcal{N}$, and we choose to represent the state of the system   in terms of the triple $(x,u,\mathcal{S})$ where
  $x\in \mathcal{N}$ is the ball's center, $u\in T_x\mathcal{N}$ the center velocity, and $\mathcal{S}\in \mathfrak{so}_x(\mathcal{N})$
 is  the tangential spin, 
   then the rolling motion 
satisfies the generalized rolling equations on $\mathcal{M}$ given in Definition \ref{GRE}.
\end{theorem}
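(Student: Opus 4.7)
The plan is to derive the equations from the Lagrange--d'Alembert principle for non-holonomic Lagrangian systems as described in \cite{bloch}. First I would set up the unconstrained system on the configuration manifold $Q = \mathcal{N} \times SO(k+1)$ with coordinates $(x,A)$, whose tangent vectors factor as pairs $(u,U)$, with $u\in T_x\mathcal{N}$ the center velocity and $U = \dot A A^{-1} \in \mathfrak{so}(k+1)$ the body angular velocity. The Lagrangian is the kinetic energy
$$L(x,A,u,U) = \tfrac12 \mathcal{m}|u|^2 + \tfrac14 \mathcal{m}(r\gamma)^2 \,\text{Tr}(UU^\intercal).$$
The no-slip rolling constraint states that the material point on the ball currently in contact with $P$ has zero spatial velocity. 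Since that point is at $x + r\,\mathbbm{n}(x)$, where $\mathbbm{n}(x)$ is the unit normal to $\mathcal{N}$ at $x$ pointing toward $P$, the constraint reads $u + rU\mathbbm{n}(x) = 0$, cutting out a rank-$(k + \binom{k}{2})$ distribution $\mathcal{D} \subset TQ$.

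Second, I would parametrize $\mathcal{D}$ via the splitting from Lemma \ref{lemma decomposition}: each $U \in \mathfrak{so}(k+1)$ decomposes uniquely as $U = S + \mathbbm{n}(x)\wedge W$, where $S = \Pi^{\mathcal{N}}_x U \Pi^{\mathcal{N}}_x \in \mathfrak{so}_x(\mathcal{N})$ and $W = U\mathbbm{n}(x) \in T_x\mathcal{N}$. The constraint forces $W = -u/r$, so the states of $\mathcal{D}$ are parametrized by triples $(x,u,S) \in T\mathcal{N} \oplus \mathfrak{so}(\mathcal{N}) = \mathcal{M}$. Substituting $U = S - \tfrac1r\,\mathbbm{n}\wedge u$ into $L$, and using $\text{Tr}((\mathbbm{n}\wedge u)(\mathbbm{n}\wedge u)^\intercal) = 2|u|^2$ together with the vanishing of the cross term (since $S$ maps $\mathbbm{n}^{\perp}\to \mathbbm{n}^{\perp}$ and $S\mathbbm{n}=0$), gives the constrained Lagrangian $L_c = \tfrac12\mathcal{m}(1+\gamma^2)|u|^2 + \tfrac14\mathcal{m}(r\gamma)^2\,\text{Tr}(SS^\intercal)$. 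The rescaling $\mathcal{S} := r\eta\,S$ with $\eta = \gamma/\sqrt{1+\gamma^2}$ is exactly the one that normalizes this induced metric to the canonical metric on $\mathcal{M}$ declared in this section, which is why $\eta$ (not $\gamma$) is the natural coupling constant.

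Third, I would apply the Lagrange--d'Alembert principle: the Euler--Lagrange expression from $L$ must be orthogonal, in the kinetic metric, to $\mathcal{D}$; equivalently, it equals a reaction force directed along the contact normal $\mathbbm{n}(x)$. This produces coupled equations for $\dot u$ and $\dot U$. The crucial identity that injects the shape operator comes from differentiating the constraint $W = -u/r$ along a trajectory in $\mathcal{N}$ and using $\nabla_u \mathbbm{n} = -\mathbb{S}_x u$, which turns Euclidean time derivatives of $\mathbbm{n}(x(t))$ into terms involving $\mathbb{S}_x u$. Regrouping intrinsically so that ambient derivatives of $u$ become the Levi-Civita covariant derivative $\nabla u/dt$ on $\mathcal{N}$ and those of the tangential block $S$ become the induced bundle covariant derivative $\nabla S/dt$ on $\mathfrak{so}(\mathcal{N})$, then substituting $\mathcal{S} = r\eta S$, yields the generalized rolling equations
$$\frac{\nabla u}{dt} = -\eta\,\mathcal{S}\mathbb{S}_x u, \qquad \frac{\nabla \mathcal{S}}{dt} = \eta\,(\mathbb{S}_x u)\wedge u.$$

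The main obstacle will be the bookkeeping in this third step. The reaction force, supported along $\mathbbm{n}$, enters $\dot U$ only through the $\mathbbm{n}\wedge W$ block, and one must verify that its apparent contribution to the tangential block $S$ is cancelled against the correction coming from $\nabla_u \mathbbm{n} = -\mathbb{S}_x u$, leaving a clean intrinsic equation for $\nabla \mathcal{S}/dt$ with no Lagrange multiplier residue. A secondary delicate point is tracking the inertia coefficient: it is the interplay of the Lagrangian factor $(r\gamma)^2$ with the rescaling $\mathcal{S} = r\eta S$ forced by step two that produces the single factor $\eta$ on both right-hand sides; checking that this factor emerges identically in the translational and rotational equations is what confirms the geometrically natural form of Definition \ref{GRE}.
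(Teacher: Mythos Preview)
Your outline is correct and follows essentially the same route as the paper: Lagrange--d'Alembert on $M\subset SE(m)$ with constraint forces in $\mathfrak{R}^\perp$, the decomposition $U = S + \nu\wedge(u/r)$ from Lemma~\ref{lemma decomposition}, and the entry of the shape operator through $\dot\nu = -\mathbb{S}u$. The only difference is the order of operations: the paper first solves for the multiplier $w$ and obtains a closed equation for $\dot U$ (Proposition~\ref{propo_roll_equation}), and only afterwards passes to the $(u,\mathcal{S})$ variables via Lemma~\ref{lemma Sdot} and Proposition~\ref{roll equations proposition}, which makes the bookkeeping you anticipate in your third step more transparent---in particular, the reaction torque $w\wedge\nu$ has identically zero $\Pi(\cdot)\Pi$ block, so no cancellation is needed there.
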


We make next a few remarks concerning what we refer to as the {\em rolling flow}. 
This  will be a flow on  $\mathcal{M}$.   Before defining it, it may be useful to briefly  compare the present situation with the case of
geodesic flows. For geodesic flows, $\mathcal{M}$ would correspond to the tangent bundle of a Riemannian manifold $\mathcal{N}$.
In the present case, we have in addition to the velocity $u=\dot{x} \in T_x\mathcal{N}$  the tangential spin velocity $\mathcal{S}$, which is coupled to $u$ with a coupling parameter $\eta$. When $\eta=0$, the motion on $T\mathcal{N}$ is geodesic flow, independent of the tangential  spin, while  $\mathcal{S}$ is parallel transported along geodesics. This is similar to so-called {\em orthogonal frame flows},  except that in our case what is parallel transported (for $\eta=0$) is not an orthonormal frame but a tensor that is related to the state of  spinning  of a frame relative to itself.

An immediate feature of the force term $f$ in Newton's equation is that $\langle e, f(e)\rangle=0$.
This follows from the easily derived identity
  $$\frac12\text{Tr}\left(\mathcal{S}\left(\mathbb{S}u \wedge u\right)^\intercal\right)= \langle u, \mathcal{S}\mathbb{S} u\rangle$$
  and the definition of the Riemannian metric.
 In particular, solution curves $e(t)$ have constant energy:
$\mathcal{E}(e(t))=\mathcal{E}(e(0))$, where
$$\mathcal{E}(e) =\frac12\|e\|^2 =  \frac12\left(|u|^2 + \frac12 \text{Tr}\left(\mathcal{S}\mathcal{S}^\intercal\right)\right). $$

The connection induces a splitting $T\mathcal{M}=E^V\oplus E^H$ as a direct sum into vertical and horizontal subbundles, and
a {\em connection map} $K_e:T_e\mathcal{M} \rightarrow \mathcal{M}_x$ (here $\mathcal{M}_x$, $x=\pi(e)$, is the vector
fiber of $\mathcal{M}$ at $x\in \mathcal{N}$.)  Recall that if $e(t)\in \mathcal{M}$ is a differentiable curve so that $e(0)=e$ and $e'(0)=\xi$, then
$$ K_e\xi = \frac{\nabla e}{dt}(0).$$
Let $Z$ be the horizontal vector field on $\mathcal{M}$ such that $d\pi_eZ(e)=e$ for all $e\in \mathcal{M}$, which we may call the {\em extended geodesic spray}, and let $F$ be the vertical vector field 
such that $K_e F(e) = \frac1\eta f(e).$ Then define $V=Z+\eta F$. 

\begin{definition}[Rolling flow]
The {\em rolling flow} is the flow on $\mathcal{M}$   whose infinitesimal generator is the vector field  $V$. 
\end{definition}

\begin{proposition} A curve
 $t\mapsto e(t)$ in $\mathcal{M}$ is an integral curve of $V=Z+\eta F$ if and only if, regarded as a vector field along $x(t)=\pi(e(t))$, it satisfies Newton's equation of Definition \ref{GRE}. 
  Moreover, the rolling flow leaves invariant the kinetic energy function $\mathcal{E}$.  When $\eta=0$, the  flow line with initial condition $(x,u,\mathcal{S})$ corresponds to the parallel transport of $\mathcal{S}$ along the geodesic
  in $\mathcal{N}$ with initial condition $(x,u)$.
\end{proposition}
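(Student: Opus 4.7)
The plan is to exploit the standard horizontal/vertical decomposition $T\mathcal{M}=E^H\oplus E^V$ induced by the connection $\nabla$ on the vector bundle $\pi\colon\mathcal{M}\to\mathcal{N}$. Any $\xi\in T_e\mathcal{M}$ is uniquely determined by the pair $(d\pi_e\xi,\,K_e\xi)\in T_x\mathcal{N}\times\mathcal{M}_x$, and for a smooth curve $e(t)$ covering $x(t)=\pi(e(t))$ one has $K_e e'(t)=\nabla e/dt$ by the definition of the connection map. This reduces the proposition to an algebraic check of the two components of $V(e)$.

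First I would unwind the defining properties of $Z$ and $F$. Since $Z$ is horizontal with $d\pi_eZ(e)$ equal to the center-velocity component $u$ of $e=(u,\mathcal{S})$, we have $K_eZ(e)=0$. Since $F$ is vertical with $K_eF(e)=\tfrac{1}{\eta}f(e)$, we have $d\pi_eF(e)=0$. Consequently, $V=Z+\eta F$ satisfies $d\pi_eV(e)=u$ and $K_eV(e)=f(e)$. By the uniqueness of the horizontal/vertical splitting, the vector-field equation $e'(t)=V(e(t))$ is therefore equivalent to the pair $\dot x(t)=u(t)$ and $\tfrac{\nabla e}{dt}=f(e(t))$, which, when expanded in the components $u$ and $\mathcal{S}$, is precisely Newton's equation from Definition \ref{GRE}.

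For energy conservation, I would differentiate $\mathcal{E}(e(t))=\tfrac12\|e(t)\|^2$ along an integral curve. Since $\nabla$ is a metric connection on $(\mathcal{M},\langle\cdot,\cdot\rangle)$,
\[
\frac{d}{dt}\mathcal{E}(e(t))=\Bigl\langle e(t),\tfrac{\nabla e}{dt}\Bigr\rangle=\langle e,f(e)\rangle.
\]
Writing $f(e)=-\eta(\mathcal{S}\mathbb{S}u,\,u\wedge\mathbb{S}u)$, using the skew-symmetry $u\wedge\mathbb{S}u=-(\mathbb{S}u\wedge u)$ of the cross product, and invoking the identity $\tfrac12\mathrm{Tr}\bigl(\mathcal{S}(\mathbb{S}u\wedge u)^\intercal\bigr)=\langle u,\mathcal{S}\mathbb{S}u\rangle$ stated just before the proposition, the $T\mathcal{N}$- and $\mathfrak{so}(\mathcal{N})$-contributions to $\langle e,f(e)\rangle$ cancel, giving $\langle e,f(e)\rangle=0$, so $\mathcal{E}$ is constant along integral curves of $V$.

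Finally, for the case $\eta=0$, the force $f$ is identically zero, so $V=Z$ and the generalized rolling equation reduces to $\dot x=u$, $\nabla u/dt=0$, $\nabla\mathcal{S}/dt=0$. The first two identify $x(t)$ as the geodesic with initial velocity $u$, while the third says that $\mathcal{S}(t)$ is the parallel transport of $\mathcal{S}(0)$ along that geodesic, as claimed. The only conceptually nontrivial point is the uniqueness of the splitting used to reverse the implication in step one, but this is a standard property of Ehresmann connections on vector bundles and requires no further verification beyond what is already built into the definition of $\nabla$ and $K$.
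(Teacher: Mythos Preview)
The paper states this proposition without proof, so there is no argument in the text to compare against; your proposal correctly supplies the routine verification that the authors omitted. Your use of the horizontal/vertical splitting to identify $e'(t)=V(e(t))$ with the pair $(\dot x=u,\ \nabla e/dt=f(e))$ is exactly the intended mechanism, and your energy computation matches the identity $\langle e,f(e)\rangle=0$ that the paper records just before the proposition.

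One small point worth flagging: the paper writes $d\pi_eZ(e)=e$, whereas you (correctly) read this as $d\pi_eZ(e)=u$, the $T\mathcal{N}$-component of $e=(u,\mathcal{S})$. Since $d\pi_e$ lands in $T_x\mathcal{N}$, the literal equality $d\pi_eZ(e)=e$ cannot hold unless one tacitly identifies $e$ with its first component; the phrase ``extended geodesic spray'' confirms that your reading is the intended one. Your proof is sound as written.
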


 \section{Elementary examples} \label{elementary examples}
 A more detailed study of the generalized rolling equations and their dynamical properties  will be taken up elsewhere. Here
 we consider  a few elementary examples for which the rolling equation can in principle  be solved analytically. The case of rolling
 over $\mathbb{R}^k$ in $\mathbb{R}^m$, $k<m$,  is of special interest. These naturally arise in the context of rolling on polygonal and polyhedral shapes, where one needs to account for motion over polyhedral  faces of various  dimensions. 
 
 Thus let $P=\mathbb{R}^k$, regarded as a submanifold of $\mathbb{R}^m$. The locus of centers of the rolling ball of radius $r$ is then $\mathcal{N} =  \mathbb{R}^k\times S^{m-k-1}(r) $, where $S^\ell(r)$ is the sphere of radius $r$ in 
 $\mathbb{R}^{\ell+1}$ centered at the origin.  At any given $x\in \mathcal{N}$ let $\Pi_x:T_x\mathcal{N}\rightarrow \mathbb{R}^k$ denote the
 orthogonal projection, where we identify   $\mathbb{R}^k$ with  its tangent space at any given point.  We also write $\Pi_x^\perp=I-\Pi_x$. 
 Now express the center of mass velocity $u$ and tangential angular velocity operator $\mathcal{S}$ at any given point $x$ as $u=u_0+u_1$ and
 $\mathcal{S}=\mathcal{S}_{00}+\mathcal{S}_{01}+\mathcal{S}_{10}+\mathcal{S}_{11}$ where
 $$u_0=\Pi_x u, \ u_1=\Pi_x^\perp u, \ \mathcal{S}_{00}=\Pi_x\mathcal{S}\Pi_x, \  \mathcal{S}_{01}=\Pi_x\mathcal{S}\Pi^\perp_x, \ 
 \mathcal{S}_{10}=\Pi^\perp_x\mathcal{S}\Pi_x, \ \mathcal{S}_{11}=\Pi^\perp_x\mathcal{S}\Pi^\perp_x.$$
 Note that the tensors $\Pi$ and $\Pi^\perp$ are parallel and the shape operator satisfies $\mathbb{S}_x  = -\frac1r \Pi^\perp_x$. The rolling  equations of   
 Definition \ref{GRE} can be shown without difficulty to take the form of the system of equations
\begin{equation}\label{Rk in Rm}\dot{u}_0 = \frac{\eta}{r} \mathcal{S}_{01}u_1, \  \dot{\mathcal{S}}_{00}=0, \ \frac{\nabla u_1}{dt} = \frac{\eta}{r}\mathcal{S}_{11} u_1,  \ \frac{\nabla \mathcal{S}_{11}}{dt}=0, \ \frac{\nabla\mathcal{S}_{01}}{dt} = -\frac{\eta}{r} u_1^\flat \otimes u_0.\end{equation}
Here $u_1^\flat$ is the covector dual to $u_1$, so that $(u_1^\flat\otimes u_0)v = (u_1\cdot v) u_0$.
At the core of the effort  of solving  these equations is the rolling problem on spheres (the third and fourth  equations). Let us consider the cases of codimensions $1, 2$ and $3$.

    \begin{example}[Codimension $1$]\label{cod1}\em 
Here $\mathcal{N}$ consists of two parallel planes of dimension $k$ in $\mathbb{R}^{k+1}$ (a distance $2r$ apart) and $u_0=u, \mathcal{S}_{00}=\mathcal{S}$.
Equations \ref{Rk in Rm} reduce to $\dot{u}=0$ and   $\dot{\mathcal{S}}=0$. Thus the ball rolls with constant center velocity and constant 
tangential spin $\mathcal{S}$. In dimension $m=3$, the latter means that the normal component $\mathcal{s}$ of the tangential spin is constant. 
  \end{example}

  \begin{example}[Codimension $2$]\label{cod2}\em 
  In this case we have $\mathcal{N}= \mathbb{R}^k\times S^{1}(r)$.  This Riemannian manifold admits a parallel orthonormal frame of vector fields $E_1, \dots, E_k, E$,
  where the $E_i$ are tangent to $\mathbb{R}^k$ and $E$ is tangent to the circle.   We have $\mathcal{S}_{11}=0$ since it is skew-symmetric and rank $1$.
  Therefore $\mu:=u\cdot E = u_1\cdot E$ is constant. This means that trajectories rotate around the circle factor at a constant rate $\mu$. 
  Let us define the quantity $w:=\mathcal{S}_{01}E$. Then $w$ and $u_0$ are both vectors in $\mathbb{R}^k$ and are related by the system of linear        equations
  $$ \dot{u}_0 = \frac{\eta \mu}{r} w, \ \ \dot{w} = -\frac{\eta\mu}{r} u_0.$$
Rolling trajectories project to ellipses  in $\mathbb{R}^k$ with the following parametric equation: 
$$ x(t) =  \cos(\omega t)\mathbbm{a} + \sin(\omega t)\mathbbm{b}+\mathbbm{c}$$ 
where $\omega = \eta \mu /r$ and 
  $$ \mathbbm{a}=  w(0)/\omega, \ \ \mathbbm{b} = u_0(0)/\omega, \ \  \mathbbm{c} = x(0) - (1/\omega)^2 \dot{u}_0(0)=x(0)-(1/\omega)w(0).$$  The quantities $(\mathcal{S}E_i)\cdot E_j$, $1\leq i<j\leq k$ are constants of motion. 
    \end{example}

\begin{figure}[htbp]
\begin{center}
\includegraphics[width=2in]{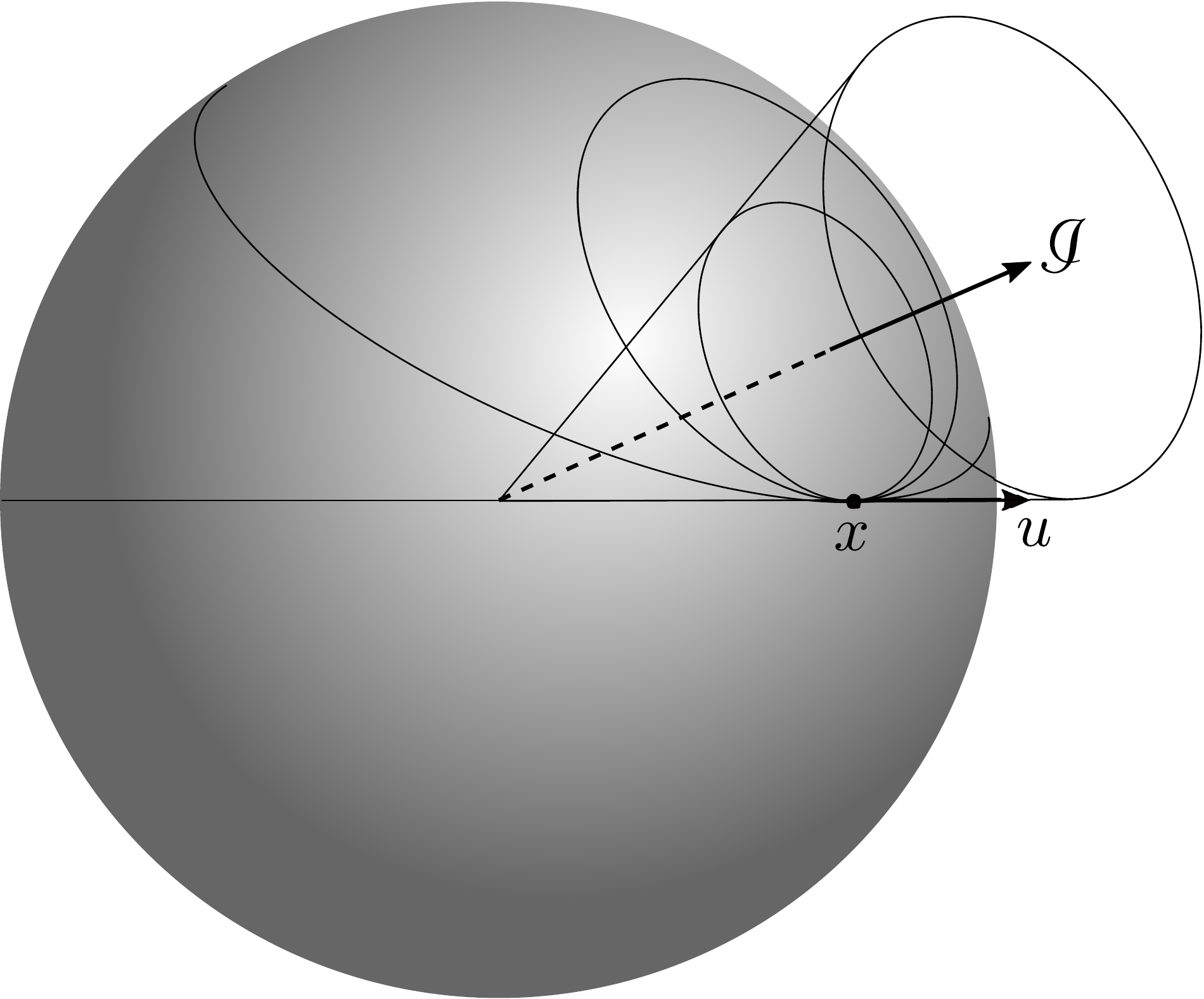}\ \ 
\caption{{\small  Rolling trajectories on spheres in $\mathbb{R}^3$ are the circles of intersection of the sphere with  cones.  See Example \ref{example cod 3}.}}
\label{cones}
\end{center}
\end{figure}

  \begin{example}[Codimension $3$]\label{example cod 3}\em
  In this case $\mathcal{N}= \mathbb{R}^k\times S^2(r)$. Points of $\mathcal{N}$ will be written $x=(x_0,x_1)$ where $x_0$ is the component in $\mathbb{R}^k$ and $x_1$ the component on the sphere. Let $J_{x_1}:T_{x_1} S^2(r)\rightarrow T_{x_1}S^2(r)$ denote positive  rotation   by $\pi/2$ (taking
  the outward pointing normal vector for the orientation of the sphere.) The tensor $J$ is parallel. 
Then  $\mathcal{S}_{11} = \mathcal{s} J$, $\frac{\nabla \mathcal{S}_{11}}{dt} =\dot{\mathcal{s}} J$ and
 the fourth among Equations (\ref{Rk in Rm}) implies that $\mathcal{s}$ is a constant of motion.  
 The third equation then turns into a linear equation on the sphere:
 \begin{equation}\label{roll on sphere} \frac{\nabla u_1}{dt} = \frac{\eta\mathcal{s}}{r} Ju_1.\end{equation}
An immediate consequence   is that $|u_1|^2$ is constant. Now define the quantity
$$\mathcal{I}:= \eta\mathcal{s} x_1 +x_1\times u_1,$$
which is a vector in $\mathbb{R}^3$ (the orthogonal complement to $\mathbb{R}^k$ in $\mathbb{R}^{k+3}$.) Here $\times$ is
the standard cross-product. Observe that
$$\dot{\mathcal{I}} = \eta\mathcal{s} u_1 +x_1\times \frac{\nabla u_1}{dt} =\eta\mathcal{s}\left(u_1 + \frac{x_1}{r}\times Ju_1\right) =0 $$
since $u_1, Ju_1$ and $x_1/r$ form a positive orthonormal basis of $\mathbb{R}^3$. Thus $\mathcal{I}$ is a constant of motion (only depending on the initial conditions).  Also note that $\mathcal{I}\cdot x_1=\eta\mathcal{s} r^2$ is constant. This means that the projection to $S^2(r)$ of
rolling trajectories are circles, traversed with uniform speed, given by the intersection of $S^2(r)$ and level sets of the function $x_1\mapsto \mathcal{I}\cdot x_1$, which are cones. (See Figure \ref{cones}.)
Let $w_1:=\mathcal{S}_{01} u_1$ and $w_2:=\mathcal{S}_{01} Ju_1$. Then $w_1, w_2, u_0\in \mathbb{R}^k$  are related by the linear system
$$\dot{w}_1 = -\frac{\eta |u_1|^2}{r} u_0 +\frac{\eta \mathcal{s}}{r} w_2, \ \dot{w}_2 = -\frac{\eta}{r} w_1, \ \dot{u}_0=\frac{\eta}{r}w_1.$$
It is now a simple calculation to solve for $w_1, w_2$ and $u_0$, as well as $\mathcal{S}_{01}$. The projection to $\mathbb{R}^k$ of rolling trajectories
are ellipses.
  \end{example}

\begin{example}[Rolling around a straight edge]  \label{around edge}\em We can use the analysis of Example \ref{cod2} to obtain a billiard interpretation 
of the rolling around the edge of a half-space.
Let $P=\mathbb{R}^{k-1}\times [0,\infty)$ be the half-space in $\mathbb{R}^k$. We view $P$ as the submanifold of $\mathbb{R}^{k+1}$ consisting of
points $x=(x_1, \dots, x_{k+1})$  such that $x_k\geq 0$ and $x_{k+1}=0$. The manifold boundary of $P$ is the subspace $\mathbb{R}^{k-1}$ corresponding to
$x_k=x_{k+1}=0$. Let   $\mathbbm{n}=(0, \dots, 0, 1,0)$ be
the unit normal vector to  $\partial P$ pointing into $P$.
Then $\mathcal{N}$ (defined for a radius $r>0$) is the product $\mathcal{N}=\mathbb{R}^{k-2}\times \mathcal{R}$ where $\mathcal{R}$ is the piecewise smooth curve in $\mathbb{R}^2$ depicted on the left-hand side of  Figure \ref{transversal}. 
\begin{figure}[htbp]
\begin{center}
\includegraphics[width=3in]{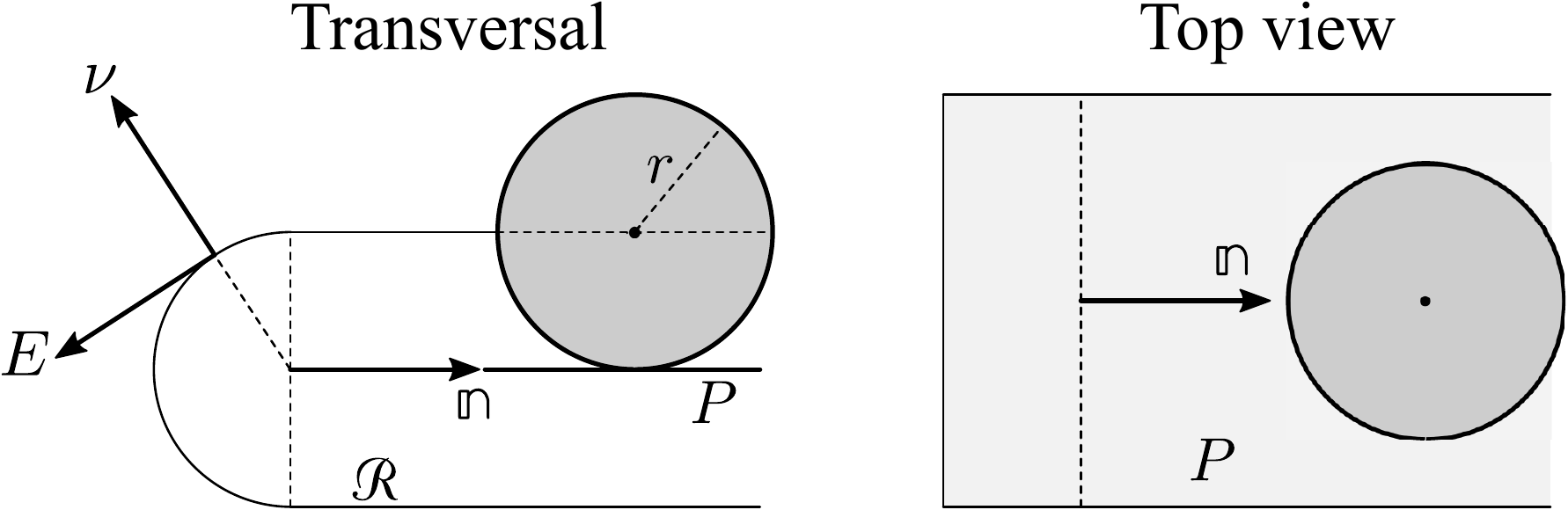}\ \ 
\caption{{\small  Notation for  Example \ref{around edge}.}}
\label{transversal}
\end{center}
\end{figure}  
We wish to determine the map that gives the velocities of the rolling ball after rolling around the edge as a function of the velocities it had immediately before.  Rolling around the edge itself is described in Example \ref{cod2}. Let $E_1, \dots, E_{k-1}, E$ be as in that example. 
We know that $\mu=u\cdot E$ is constant, so the time it takes the ball to roll all the way around the edge (from the moment it leaves, say, the top flat sheet of $\mathcal{N}$ to the moment it enters the bottom one or vice versa) is $T=\pi r /|\mu|.$ The tangential angular velocity  matrix $\mathcal{S}$ is fully specified by the components
$w_i=(\mathcal{S}E)\cdot E_i$, $i=1, \dots, k-1$, and the constants $(\mathcal{S}E_i)\cdot E_j$, $1\leq i<j\leq k-1$. 
The quantities
  $u_0=\sum_{i=1}^{k-1}(u\cdot E_i) E_i$  and $w=\sum_{i=1}^{k-1}w_i E_i$ satisfy, on the curved part of $\mathcal{N}$, the system of differential equations
 of Example \ref{cod2} whose solution can be written as follows: 
 $$\left(\begin{array}{c}u^+_0 \\w^+\end{array}\right) =\exp\left\{ \frac{\eta\mu}{r}T\left(\begin{array}{rr}0 & I \\-I & 0\end{array}\right)\right\}\left(\begin{array}{c}u^-_0 \\w^-\end{array}\right) =     \left(\begin{array}{rr}\cos(\sigma\pi\eta)I & -\sin(\sigma\pi\eta)I \\\sin(\sigma\pi\eta)I & \cos(\sigma\pi\eta)I\end{array}\right)   \left(\begin{array}{c}u^-_0 \\w^-\end{array}\right).$$
 Here $-$ and $+$ indicate the velocities before and after rolling around the curved part of $\mathcal{N}$
 and $\sigma\in \{+,-\}$ is the sign of $\mu$. This sign is positive when rolling begins at the top sheet of $\mathcal{N}$ and negative otherwise. 
   For our later needs, we rewrite this relation
 as follows. Define $W^\pm= \mathcal{S}\mathbbm{n}$. (Here, as for $w^\pm$, the sign indicates ``before'' and ``after.'') With these conventions
 we have
\begin{equation}\label{billiard interpretation} \left(\begin{array}{c}u^+_0 \\W^+\end{array}\right)  =     \left(\begin{array}{rr}\cos(\pi\eta)I & \sin(\pi\eta)I \\\sin(\pi\eta)I & -\cos(\pi\eta)I\end{array}\right)   \left(\begin{array}{c}u^-_0 \\W^-\end{array}\right). \end{equation}
 Relation (\ref{billiard interpretation}) has the following billiard interpretation. If one observes from above (see the right-hand side diagram of Figure  \ref{transversal}) the motion of a small ball  that rolls around the edge of the half-space, the motion appears as a collision of a rotating disc (the flattened ball) with the boundary of $P$;
 the   component of the velocity of the center of the disc perpendicular to the boundary of $P$  changes sign while the other components of this velocity and
 the components of the tangential  spin matrix $\mathcal{S}$ are exchanged according to Equation (\ref{billiard interpretation}).
 In particular,  if $\eta=0$, the disc undergoes ordinary billiard (specular) reflection and its direction of rotation   is reversed. Also note that the linear map in (\ref{billiard interpretation}) does not depend on the radius $r$. 
\end{example}

\begin{figure}[htbp]
\begin{center}
\includegraphics[width=3in]{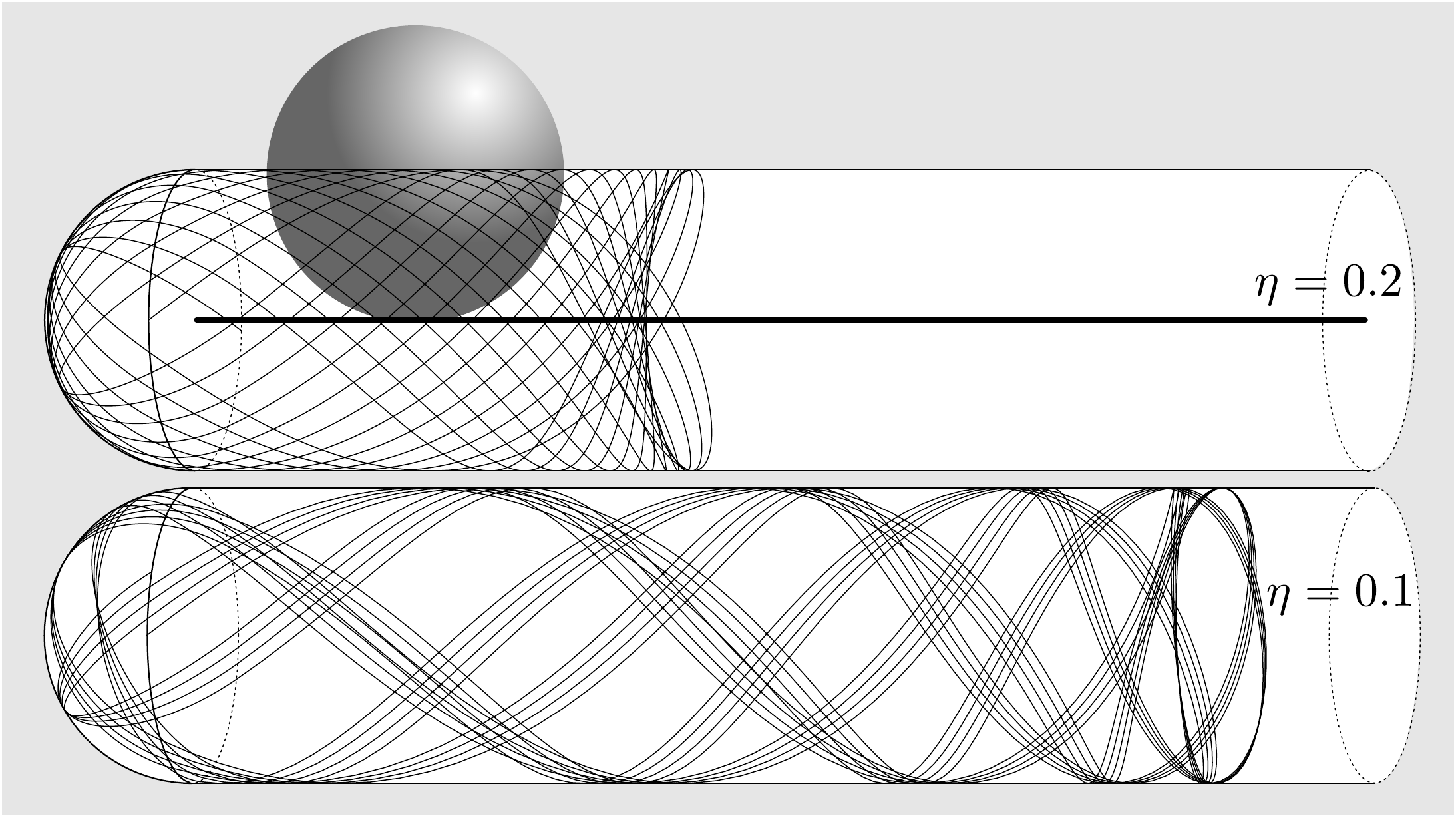}\ \ 
\caption{{\small  The elementary examples  \ref{cod1}, \ref{cod2}, \ref{example cod 3} are building blocks for rolling on polygonal or polyhedral plates. 
As a
simple illustration in dimension $1$,  rolling on a semi-infinite line in $\mathbb{R}^3$ is described by putting together the codimension $2$ and codimension $3$
examples.}}
\label{semi_infinite}
\end{center}
\end{figure}

\begin{example}[Rolling on a semi-infinite line in $\mathbb{R}^3$]\em   
The above and similar examples in higher codimension   may be regarded as the building blocks of rolling systems on polygonal or polyhedral convex shapes. For example, rolling on a convex polygonal plate in dimension $3$ involves rolling on the  surface interior (codimension $1$), on the edges (codimension $2$) and on the vertices (codimension $3$).
As a very simple example of this idea, consider the rolling of a ball in dimension $3$ over a semi-infinite straight line. (See Figure \ref{semi_infinite};  we plan a detailed investigation  of
the dynamics of rolling flows on polygonal  plates in a future study.)  This makes use of the codimensions $2$ and $3$ examples. 
The following observations use notation from Example \ref{cod2}. 
The maximum displacement along the semi-infinite line as a function of initial conditions and parameters is 
$$ \mathcal{m}_d=\frac{1}{\omega}\left[\sqrt{(u_0(0))^2+(\mathcal{s}(0))^2} - \mathcal{s}(0)\right].$$
We recall that  $\omega= \eta\mu/r$, where $\mu$ is the constant  velocity of rotation about the axis of the cylinder.
Here the initial point is on the equator of the spherical cap and the initial velocity points into the cylindrical end. 
In particular, for a given set of initial conditions, this displacement is a linear function of $\eta^{-1}$. Naturally, as the moment of inertia  parameter $\eta$ approaches $0$ and the
trajectory of the center of the ball approaches a geodesic path, this displacement, generically,  approaches infinity. 
In each excursion from, and back to, the equator of the spherical cap, the initial and final values of $\mathcal{s}$ are the same ($\mathcal{s}$ is constant on the spherical cap but not on the cylinder) and $u_0$ changes sign, while the projection of $u$ to the plane orthogonal to the axis of the cylinder simply rotates.

\end{example}

\section{Review of differential geometry of rolling} 
\label{derivation of newton equation}
For the sake of completeness and in order to establish notation, we review in this section  the derivation of   the equations of motion of a ball of
dimension $m$ with spherically symmetric mass distribution rolling without slipping on a submanifold $P$ of $\mathbb{R}^m$. The submanifold is allowed to have nonempty  boundary (or corners) and arbitrary codimension although certain  natural restrictions on corners are needed in order that the hypersurface $\mathcal{N}$ be differentiable.   (As a typical example having  nondifferenciable $\mathcal{N}$, consider the flat plate $P$ in $\mathbb{R}^3$ given by the set difference of $\mathbb{R}^2$ minus the first quadrant.)
\subsection{Preliminaries on constrained rigid motion}
 We begin by laying out some preliminary information about the Euclidean group.
Let ${SE}(m)={SO}(m)\ltimes \mathbb{R}^m$ denote the special Euclidean group of orientation preserving isometries of Euclidean space.   In this section only, we find it convenient to  write elements of $SE(m)$ as $g=(a,A)$ (rather than $(x,A)$ as before); they operate  on $\mathbb{R}^m$ by rigid transformations according to the action $g(x)=Ax+a$. We regard $SE(m)$ as the
configuration manifold of a {\em rigid body} $\mathcal{B}\subset \mathbb{R}^m$. Here $\mathcal{B}$ is, for the moment, a general  measurable set with 
mass distribution defined by a finite positive measure $\mu$, but will be shortly specialized to a ball of radius $r$.  A {\em motion} of $\mathcal{B}$ is a  differentiable  path $g(t)=(a(t), A(t))\in SE(m)$. 
We write $g=g(0)$, $\xi=g'(0)=(a',A')$, $u_\xi=a'$, and $U_\xi= A'A^{-1}\in \mathfrak{so}(m)$. The velocity of material point $x\in \mathcal{B}$ at $t=0$ is
$$ V_x(g,\xi)=\left.\frac{d}{dt}\left(A(t)x+a(t)\right)\right|_{t=0}= A'x+a' = U_\xi Ax+u_\xi.$$
We refer to $U_\xi$ as the {\em angular velocity} matrix.  See Figure \ref{state}.

\begin{figure}[htbp]
\begin{center}
\includegraphics[width=4.0in]{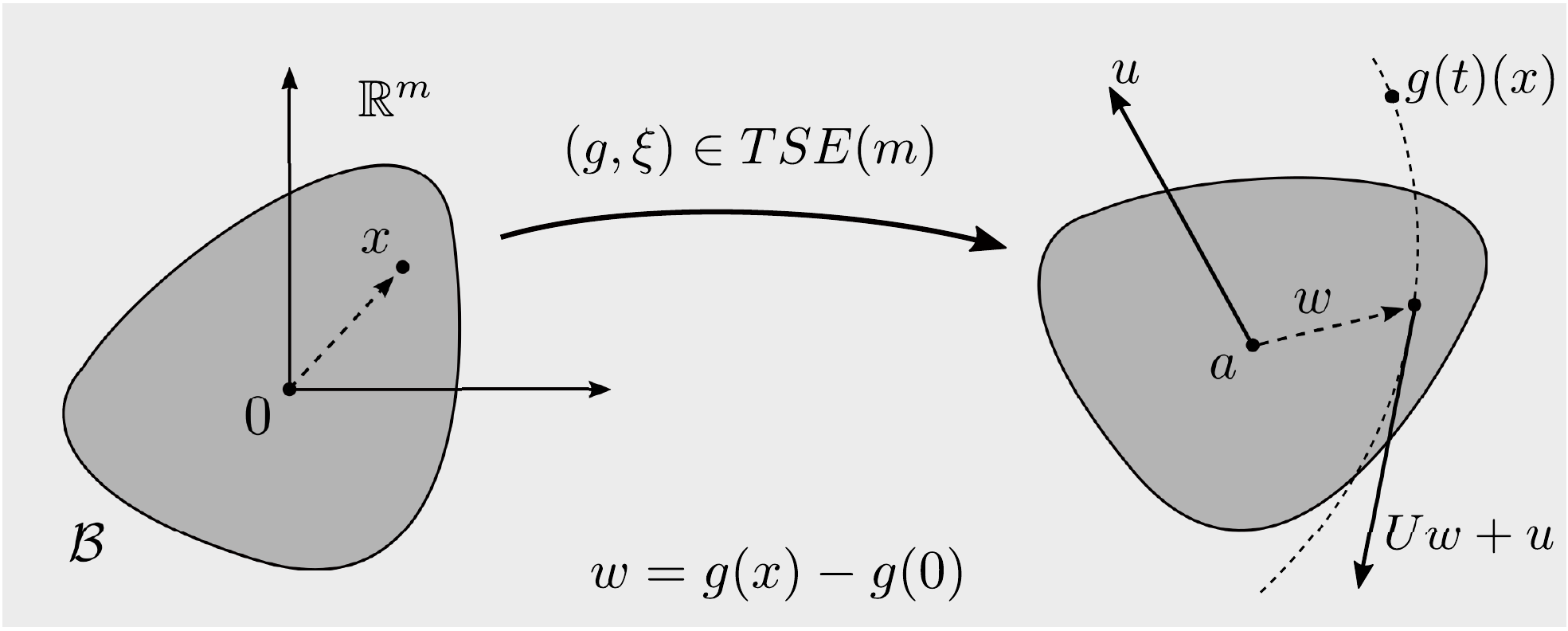}\ \ 
\caption{{\small The state $(g,\xi)\in TSE(m)$  gives  the velocity $V_x(g,\xi)=Uw+u$ of each material point $x\in \mathcal{B}$ at a moment in time.  The notation $g(t)(x)$ means  the transformation of $x$ due to configuration $g(t)\in SE(m)$.}}
\label{state}
\end{center}
\end{figure}

The kinetic energy of the moving body is the  function  of the state $\xi\in T_gSE(m)$ at configuration $g$ given by
$$K_g(\xi)=\frac12\int_{\mathcal{B}} |V_x(g,\xi)|^2\, d\mu(x). $$
 It is the quadratic form associated to the symmetric bilinear form 
$$ \langle\xi,\zeta\rangle_g:= \mathcal{m} \left[ u_\xi\cdot u_\zeta + \frac12 \text{Tr}\left(\mathcal{L}(U_\xi) U_\zeta^\intercal\right)\right].$$
In this expression, $\mathcal{m}=\mu(\mathcal{B})$ is the total mass of the body, $ u_\xi\cdot u_\zeta$ is the ordinary dot product, and $\mathcal{L}(U)$  is a certain linear map on $\mathfrak{so}(m)$ that depends on the second moments of the mass distribution $\mu$, as defined in \cite{CF}. When $\mathcal{B}$  is  a ball of radius $r$ centered at the origin of $\mathbb{R}^m$ and $\mu$ is a rotationally symmetric mass distribution,  $\mathcal{L}$ is  a scalar transformation. The resulting bilinear form in this case defines the following Riemannian metric on $SE(m)$: 
\begin{equation}\label{Riemannian metric} \langle\xi,\zeta\rangle_g:= \mathcal{m} \left[ u_\xi\cdot u_\zeta + \frac{r^2\gamma^2}2 \text{Tr}\left(U_\xi U_\zeta^\intercal\right)\right].\end{equation}
where $\gamma$ is a {\em moment of inertia} parameter. We also define $\eta=\gamma/\sqrt{1+\gamma^2}\in [0,1)$. When $\eta=\gamma=0$, the entire mass of the body is concentrated at the center of the ball. In this case body rotation does not contribute to the kinetic energy and the inner product becomes degenerate.

\begin{figure}[htbp]
\begin{center}
\includegraphics[width=4.0in]{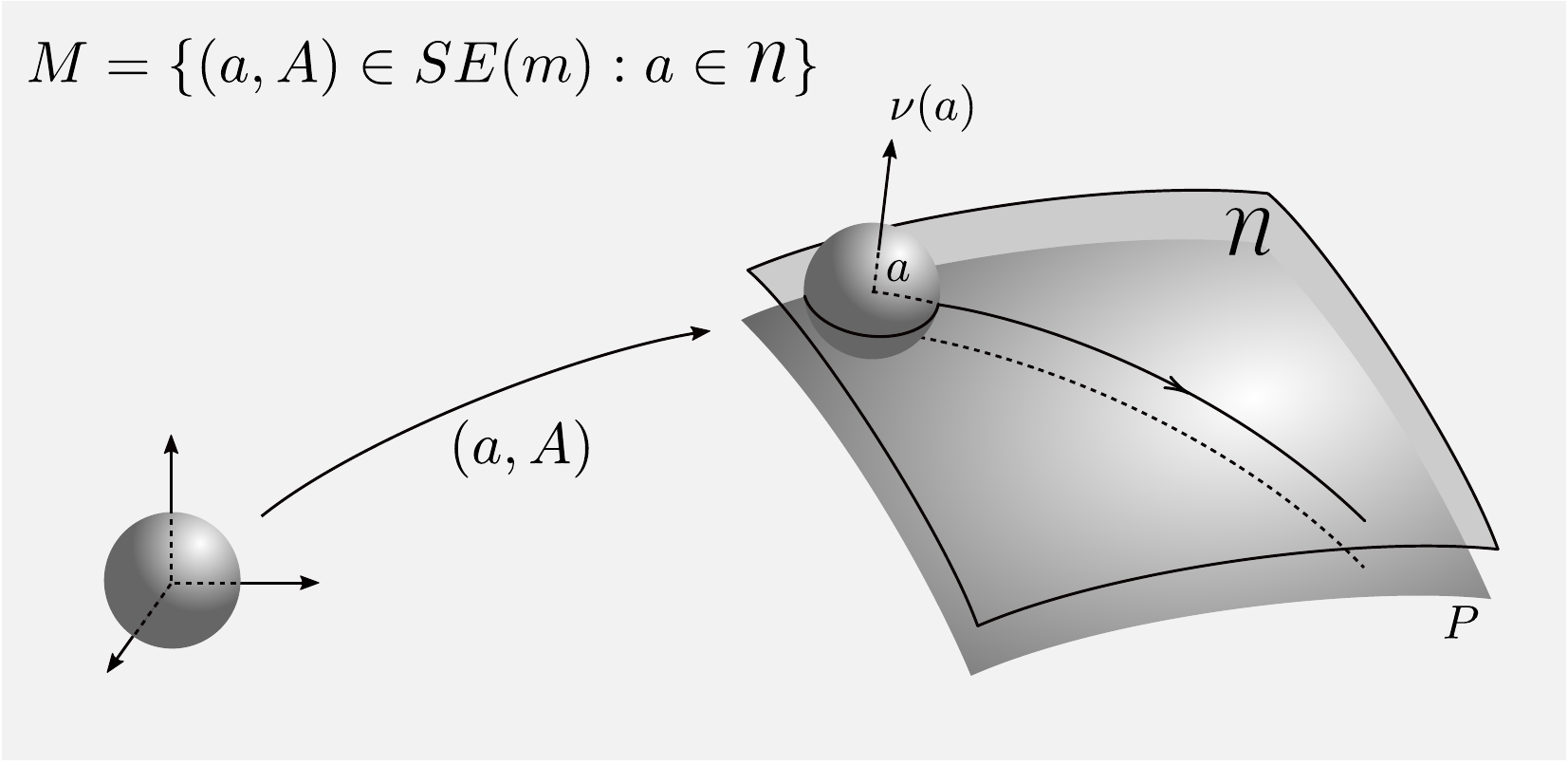}\ \ 
\caption{{\small   The rolling ball satisfies the holonomic  constraint of being in tangential contact with the submanifold $P$, hence its center should
lie on the hypersurface $\mathcal{N}$ at distance $r$ (the radius of the ball) from $P$. It also satisfies the non-holonomic constraint that the velocity at the
point of contact with $P$ is zero.  }}
\label{roll}
\end{center}
\end{figure}

 Let us now restrict  the motion   so that the ball rolls over a submanifold  $P$ of $\mathbb{R}^m$  without slipping. More details can be found in \cite{CCCF}.
 The set-up is as shown in Figures \ref{roll} and \ref{constraint}. The locus of possible centers  is  $\mathcal{N}=\mathcal{N}(r)$. Thus $\mathcal{N}$ is the set of points in $\mathbb{R}^m$ at distance $r$ from $P$. We assume that $P$ is such that   $\mathcal{N}$ is an imbedded  submanifold of $\mathbb{R}^m$ for  sufficiently small $r$. Note that, when $P$ has boundary, $\mathcal{N}$ may fail to be smooth even if $P$ is smooth  although the unit normal vector field $a\mapsto \nu(a)$ to $\mathcal{N}$ (pointing to the side of rolling)  often is, and is here assumed to be, piecewise smooth and continuous.
 The {\em no-slip} constraint requires   the velocity of the point on the ball in configuration $g$ and   in tangential contact with $P$ to  be zero.

This is expressed analytically as follows.
At  state $(g,\xi)$, the velocity of the contact point  $p=g(x)$  is $$V_x(g,\xi)= u_\xi - r U_\xi \nu(a)$$ where $g=(a,A).$
    Note that $V_x(g,\xi)$ is the sum of the velocity $u_\xi$ of the center  $a$ of the ball in configuration $(a,A)$  and the velocity $-rU_\xi \nu(a)$ of the contact point   $p$ relative to  $a$.
  Thus the constraint equation is $$u_\xi = rU_\xi \nu(a).$$ See Figure \ref{constraint}.
This equation defines a vector subbundle of $TM$, which we call the {\em rolling bundle} and denote $\mathfrak{R}$. Thus we require of the motion $t\mapsto g(t)$ that $g(t)\in M$ and $g'(t)\in \mathfrak{R}_{g(t)}$ for all $t$.

\begin{figure}[htbp]
\begin{center}
\includegraphics[width=2.5in]{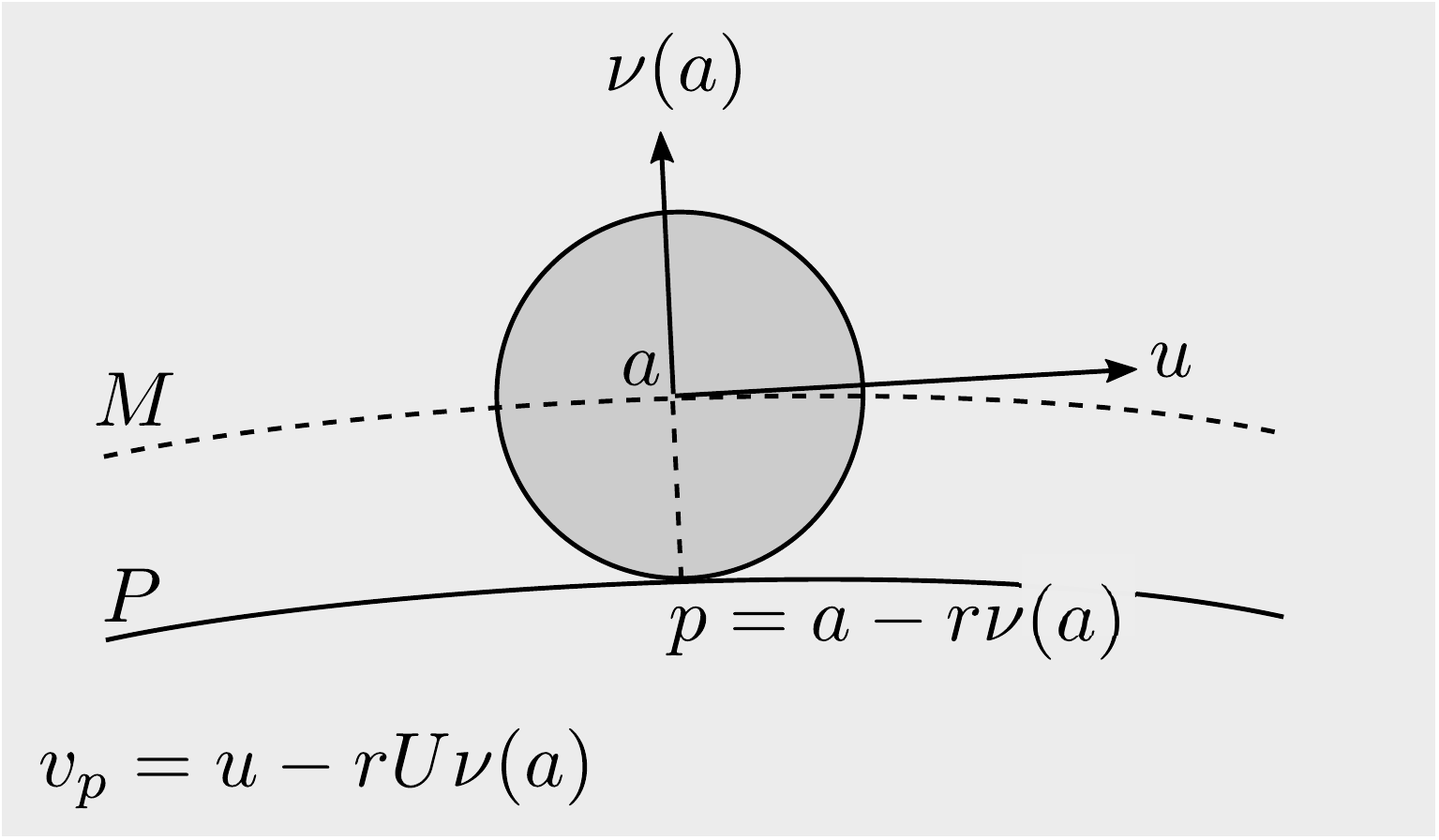}\ \ 
\caption{{\small   The velocity $v_p$ of the point of contact is zero under the no-slip constraint.}}
\label{constraint}
\end{center}
\end{figure}

 \begin{definition}[Rolling bundle]
 The {\em rolling bundle} is the vector subbundle $\mathfrak{R}$ of $TM$, where $M=\{g=(a,A)\in SE(m): a\in \mathcal{N}\}$, such that
 $$\mathfrak{R}_g = \{(u, UA)\in T_gM: u = rU\nu(a)\}. $$
 \end{definition}

 \begin{proposition}\label{R_perp prop}
 At $g=(a,A)\in M$ let $\mathfrak{R}^\perp_g$ denote the orthogonal complement of $\mathfrak{R}_g$ in $T_gSE(m)$ with respect to the kinetic energy Riemannian metric. Then 
 \begin{equation}\label{R_perp}\mathfrak{R}_g^\perp =\left\{\left(\frac{1}{r\gamma^2} w \wedge \nu(a) A, w\right): w\in \mathbb{R}^m\right\}\end{equation}
 and $\mathfrak{R}_g^\perp\bigcap T_gM$ has the same expression  but with $w\in T_a\mathcal{N}$. It follows that $\dim \mathfrak{R}^\perp_g=m$, $\dim \mathfrak{R}^\perp_g\bigcap T_gM=m-1$ and $\dim \mathfrak{R}_g=\dim SE(m)-\dim \mathfrak{R}_g^\perp=\frac{m(m-1)}{2}$.
 \end{proposition}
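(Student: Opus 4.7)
The plan is to parametrize an arbitrary tangent vector $\zeta=(u_\zeta,U_\zeta A)\in T_gSE(m)$, pair it with an arbitrary element $\xi=(rU_\xi\nu(a),U_\xi A)\in \mathfrak{R}_g$, and read off from the vanishing of $\langle\xi,\zeta\rangle_g$ the conditions that characterize $\zeta\in \mathfrak{R}_g^\perp$. By the metric (\ref{Riemannian metric}),
$$
\langle\xi,\zeta\rangle_g=\mathcal{m}\left[r\bigl(U_\xi\nu(a)\bigr)\cdot u_\zeta+\frac{r^2\gamma^2}{2}\mathrm{Tr}\bigl(U_\xi U_\zeta^\intercal\bigr)\right].
$$
The first step is to convert the dot-product term into a trace so that both terms are pairings of $U_\xi$ against a single skew-symmetric matrix. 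For any $p,q\in\mathbb{R}^m$ and $S\in\mathfrak{so}(m)$ one checks directly from the defining formula for $\wedge$ that
$$
\mathrm{Tr}\bigl(S\,(p\wedge q)\bigr)=2\,p^\intercal Sq,
$$
and hence $u_\zeta\cdot U_\xi\nu(a)=\tfrac{1}{2}\mathrm{Tr}\bigl(U_\xi(u_\zeta\wedge\nu(a))\bigr)$. Using also that $u_\zeta\wedge\nu(a)$ and $U_\zeta$ are skew, so that $(u_\zeta\wedge\nu(a))^\intercal=-(u_\zeta\wedge\nu(a))$ and $U_\zeta^\intercal=-U_\zeta$, the inner product rearranges to
$$
\langle\xi,\zeta\rangle_g=\frac{\mathcal{m}\,r}{2}\,\mathrm{Tr}\!\left(U_\xi\bigl[\,u_\zeta\wedge\nu(a)-r\gamma^2 U_\zeta\,\bigr]^{\!\intercal}\cdot(-1)\right),
$$
i.e., up to a nonzero constant, $\langle\xi,\zeta\rangle_g$ equals the $\mathfrak{so}(m)$ inner product of $U_\xi$ with $X:=u_\zeta\wedge\nu(a)-r\gamma^2 U_\zeta\in\mathfrak{so}(m)$.

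The second step is to use non-degeneracy. Since $(U,X)\mapsto\mathrm{Tr}(UX^\intercal)$ restricts to a positive-definite inner product on $\mathfrak{so}(m)$, the vanishing of $\langle\xi,\zeta\rangle_g$ for every $U_\xi\in\mathfrak{so}(m)$ (and hence every $\xi\in\mathfrak{R}_g$) is equivalent to $X=0$, i.e.
$$
U_\zeta=\frac{1}{r\gamma^2}\,u_\zeta\wedge\nu(a).
$$
Setting $w:=u_\zeta$, this yields exactly the formula (\ref{R_perp}) for $\mathfrak{R}_g^\perp$, with $w$ ranging over all of $\mathbb{R}^m$ since $u_\zeta$ was unconstrained in $T_gSE(m)$.

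The third step handles the intersection with $T_gM$ and the dimension count. Membership in $T_gM$ imposes the single extra condition $u_\zeta\in T_a\mathcal{N}$, i.e.\ $w\cdot\nu(a)=0$, giving the stated description of $\mathfrak{R}_g^\perp\cap T_gM$ with $w\in T_a\mathcal{N}$. Dimensions then follow by inspection: $\dim\mathfrak{R}_g^\perp=m$ since $w$ ranges freely in $\mathbb{R}^m$ and the assignment $w\mapsto(w,\tfrac{1}{r\gamma^2}(w\wedge\nu(a))A)$ is injective (the first coordinate already determines $w$); the same argument restricted to $w\in T_a\mathcal{N}$ gives $\dim\mathfrak{R}_g^\perp\cap T_gM=m-1$; and from $\dim SE(m)=m+\tfrac{m(m-1)}{2}$ one gets $\dim\mathfrak{R}_g=\dim SE(m)-m=\tfrac{m(m-1)}{2}$. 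No part is really an obstacle; the only mildly delicate point is getting the signs straight in the trace identity and in the transposes of skew matrices, which is why I would carry out that bookkeeping first before attempting the identification of $\mathfrak{R}_g^\perp$.
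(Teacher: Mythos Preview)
Your proof is correct and uses essentially the same core identity as the paper, namely $\tfrac12\mathrm{Tr}\bigl((v\wedge w)U^\intercal\bigr)=(Uv)\cdot w$. The only structural difference is that the paper verifies the inclusion of the proposed subspace into $\mathfrak{R}_g^\perp$ and then appeals implicitly to a dimension count, whereas you derive the characterization $U_\zeta=\tfrac{1}{r\gamma^2}u_\zeta\wedge\nu(a)$ directly from non-degeneracy of the trace form; both are equally valid and rest on the same computation.
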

 \begin{proof}
The main observation, from which the rest follows, is the orthogonality between $\mathfrak{R}_g$ and the subspace defined by the right-hand side of Equation (\ref{R_perp}). This is shown by making use of the easily verified general identity
$$ \frac12 \text{Tr}\left((v\wedge w) U^\intercal\right)=\left(Uv\right)\cdot w.$$
Granted this identity and having in mind that $u=rU\nu(a)$ when $\left(u,UA\right)\in \mathfrak{R}_g$, we obtain
\begin{align*}
\left\langle \left(u,UA\right), \left(w,\frac1{r\gamma^2} w\wedge \nu(a) A\right)\right\rangle &=  \mathcal{m} \left\{ u\cdot w +\frac{r^2\gamma^2}{2}\text{Tr}\left(\frac1{r\gamma^2} {w\wedge \nu(a)} U^\intercal\right)\right\}\\
&= \mathcal{m}\left\{r w\cdot U\nu(a) + r\left(Uw\right)\cdot \nu(a)\right\}
\end{align*}
but the last expression is $0$ since $U$ is skew symmetric.
 \end{proof}

 \subsection{Newton's equation under constraint}
 We  obtain  next Newton's equations for the motion of the rolling ball. 
 The Riemannian metric given above on $SE(m)$  is a product metric that agrees with the Euclidean metric on the normal  subgroup   $\mathbb{R}^m$ and defines a bi-invariant (for the rotationally symmetric mass distribution) Riemannian metric on   $SO(m)$. The latter metric is $\langle X,Y\rangle = c\, \text{Tr}\left(XY^\intercal\right)$ where  $c$ is a positive constant and $X,Y\in \mathfrak{so}(m)$. It is easily shown that the Levi-Civita connection on $SO(m)$ satisfies
 $$\nabla_XY = \frac12[X,Y] $$
 for left-invariant vector fields $X,Y$. 
 
 We often denote the derivate of parametric curves $A(t)$ as $\dot{A}$  when the parameter is interpreted as time.
It is useful to note that if
  $A(t)\in SO(m)$ is a twice differentiable curve and  $U(t)=\dot{A}(t)A(t)^{-1}$, then
 $$\frac{\nabla \dot{A}}{dt} =\dot{U} A.$$
 In fact, let $E_1, \dots, E_N$, $N=m(m-1)/2$, be  orthonormal, right-invariant vector fields on $SO(m)$. Note that $\left\langle \dot{A}, E_j\right\rangle_A=
 \left\langle U, E_j\right\rangle_I$ so  $\dot{A}(t) = \sum_j\left\langle U,E_j\right\rangle_I E_j(A(t))$. Then
 \begin{align*}
 \frac{\nabla \dot{A}}{dt}&=\sum_j\left\langle \dot{U}, E_j\right\rangle_I E_j(A) + \sum_j \left\langle {U}, E_j\right\rangle_I \nabla_{\dot{A}}E_j(A)\\
 &=\dot{U}A + \sum_{j,k} \left\langle {U}, E_j\right\rangle_I \left\langle \dot{A}, E_k(A)\right\rangle_A \nabla_{E_k}E_j(A)\\
 &= \dot{U}A + \frac12\sum_{j,k} \left\langle {U}, E_j\right\rangle_I \left\langle {U}, E_k\right\rangle_I  [E_k,E_j](A)\\
 &=  \dot{U}A + \frac12 [U,U]\\
 &=  \dot{U}A.
 \end{align*}
 Similarly, if we had defined  $U(t)=A(t)^{-1}\dot{A}(t)$, then $\frac{\nabla \dot{A}}{dt}= A\dot{U}$. It follows that if $g(t)$ is a twice differentiable path
 in $SE(m)$, then
 $$\frac{\nabla \dot{g}}{dt} = \left( \dot{U}A, \Pi_a \dot{u}\right) $$
 where $\Pi_a:\mathbb{R}^m\rightarrow T_a\mathcal{N}$ is the orthogonal projection.

Newton's equation takes the form 
$$\mathcal{m} \frac{\nabla \dot{g}}{dt} = N(g,\dot{g})$$
where $N(g,\dot{g})\in \mathfrak{R}_g^\perp$ is the constraint force required for the condition $\dot{g}\in \mathfrak{R}_g$ to hold without producing work. Thus $N(g,\dot{g})=\left(w,\frac1{r\gamma^2} w\wedge \nu(a) A\right)$ for some $w\in \mathbb{R}^m$. Inserting the expressions for $N$ and $\nabla\dot{g}/dt$ into Newton's equation gives the first two below equalities, the third being the velocity constraint condition:
\begin{equation}\label{three_equations}
\mathcal{m}\dot{U} =  \frac1{r\gamma^2} w\wedge \nu(a), \ \ 
\mathcal{m}\Pi_a \dot{u}  =  w, \ \ 
u= rU\nu(a).  
\end{equation}
Differentiating the third equation, keeping in mind that the derivative of $\nu(a(t))$ in $t$ equals  $-\mathbb{S}_{a(t)} u$ where $\mathbb{S}_a$ is the shape operator of $\mathcal{N}$ at $a$, gives
$$ \dot{u}=r\dot{U}\nu(a) - rU\mathbb{S}_a u.$$
From this and the identity $-(w\wedge \nu(a))\nu(a)= w-w\cdot\nu(a) \nu(a) = \Pi_aw$ together with the 
first of the above three equations in (\ref{three_equations}), we obtain
$$\mathcal{m}\left(\dot{u}+rU\mathbb{S}_au\right)  =   -\frac1{\gamma^2}\Pi_aw. $$
The second of the identities in (\ref{three_equations}) now implies
$$w+ \mathcal{m}r\Pi_a U \mathbb{S}_a u = -\frac1{\gamma^2}\Pi_aw,$$
from which we obtain, after some manipulation, the value of $w$ in terms of   $U$:
\begin{equation}\label{w}
w = -\mathcal{m} r\frac{\gamma^2}{1+\gamma^2}\Pi_aU \mathbb{S}_a u.
\end{equation}
Using once more the first of the equalities in (\ref{three_equations}) we arrive at the equation
\begin{equation}\label{first equation}
\mathcal{m}\dot{U}= \frac{1}{r\left(1+\gamma^2\right)}r\mathcal{m}U\mathbb{S}_a u \wedge\nu(a).
\end{equation}
\begin{proposition}[Rolling equation I]\label{propo_roll_equation}
Let $g(t)=(a(t),A(t))$ represent the motion of the ball with rotationally symmetric mass distribution and moment of inertia parameter $\gamma$, subject to
 the nonholonomic constraint defined by the rolling distribution $\mathfrak{R}\subset TM$. Then
$$ \dot{U}= -\frac{r}{\left(1+\gamma^2\right)}\left(U\mathbb{S}_a U\nu(a)\right)\wedge\nu(a).$$
From a solution $U$ of one of these equations we  obtain the center velocity $u$ using the constraint equation $u=rU\nu(a)$.
\end{proposition}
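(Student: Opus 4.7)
The plan is to apply the Lagrange--d'Alembert principle, which asserts that the constraint force $N(g,\dot{g})$ enforcing the non-holonomic rolling constraint $\dot{g}\in\mathfrak{R}_g$ must lie in the orthogonal complement $\mathfrak{R}_g^\perp$ of the rolling bundle with respect to the kinetic-energy metric, restricted to vectors also tangent to $M$. By Proposition \ref{R_perp prop}, such a force has the form $N(g,\dot{g})=\left(w,\tfrac{1}{r\gamma^2} w\wedge\nu(a)A\right)$ for a unique $w\in T_a\mathcal{N}$. Newton's equation $\mathcal{m}\,\nabla\dot{g}/dt=N(g,\dot{g})$, combined with the previously derived identity $\nabla\dot{g}/dt=(\dot{U}A,\Pi_a\dot{u})$, then splits into the three relations already displayed in (\ref{three_equations}), plus the constraint $u=rU\nu(a)$.

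The goal is to eliminate both $w$ and $u$ in favor of $U$. First I would differentiate the constraint $u=rU\nu(a)$ in $t$; using $\frac{d}{dt}\nu(a(t))=-\mathbb{S}_a u$ this yields $\dot{u}=r\dot{U}\nu(a)-rU\mathbb{S}_a u$. Plugging this into the first of (\ref{three_equations}) and using the algebraic identity $(w\wedge\nu(a))\nu(a)=-\Pi_a w$ converts $\mathcal{m}r\dot{U}\nu(a)$ into $-(1/\gamma^2)\Pi_a w$. Combining with the second equation $\mathcal{m}\Pi_a\dot{u}=w$ (and using that $w\in T_a\mathcal{N}$, so $\Pi_a w=w$) produces a single linear equation for $w$ whose solution is
$$w=-\mathcal{m}r\,\frac{\gamma^2}{1+\gamma^2}\,\Pi_a U\mathbb{S}_a u.$$

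Feeding this expression back into $\mathcal{m}\dot{U}=(1/(r\gamma^2))\,w\wedge\nu(a)$ and invoking the identity $(\Pi_a v)\wedge\nu(a)=v\wedge\nu(a)$ (which holds because $\nu(a)\wedge\nu(a)=0$) kills the projection $\Pi_a$. A final substitution $u=rU\nu(a)$ delivers the claimed formula.

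The main obstacle is really bookkeeping: one must keep straight the two small $\mathfrak{so}(m)$ identities $(w\wedge\nu)\nu=-\Pi_a w$ and $(\Pi_a v)\wedge\nu=v\wedge\nu$, and remain consistent with the sign convention $d\nu/dt=-\mathbb{S}_a u$ for the shape operator. Once these are set up the derivation is essentially forced, since the three equations of (\ref{three_equations}) are all linear in $w$ and $\dot{U}$, so $w$ can be solved for algebraically and back-substituted with no further analytic input.
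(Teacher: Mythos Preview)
Your proposal is correct and follows essentially the same route as the paper: the derivation preceding the proposition already establishes the three relations (\ref{three_equations}), differentiates the constraint using $\dot\nu=-\mathbb{S}_a u$, solves linearly for $w$ to obtain (\ref{w}), and substitutes back into the first of (\ref{three_equations}). Your write-up is slightly more explicit about the two $\mathfrak{so}(m)$ identities $(w\wedge\nu)\nu=-\Pi_a w$ and $(\Pi_a v)\wedge\nu=v\wedge\nu$, but the argument is otherwise identical.
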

\begin{proof}
The given equation follows from (\ref{w}) by simple manipulations.
\end{proof}

\subsection{An alternative form of the rolling equation}
We give here an alternative form of the rolling equation that will be better adapted for our later purposes. Instead of using the full angular velocity matrix $U$, we write Newton's equation as a system involving the center velocity $u$  and a tensor on $\mathcal{N}$ which we call the {\em tangential spin matrix} (not to be confused with spin structures in the standard sense of Riemannian geometry!)

\begin{lemma}\label{lemma decomposition}
Let $\nu$ be a unit vector in $\mathbb{R}^m$  and $\Pi$ the orthogonal projection to the codimension-$1$ subspace  perpendicular to $\nu$.
Then any $V\in \mathfrak{so}(m)$ can be written as
$$V= \Pi V \Pi + \nu \wedge V\nu. $$
This is an orthogonal decomposition with respect to the trace inner product.
\end{lemma}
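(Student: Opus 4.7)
The plan is to verify the pointwise identity by applying both sides to an arbitrary test vector and then verify orthogonality by a direct trace computation. Both steps reduce to two small facts about a skew-symmetric operator $V$: that $V\nu \cdot \nu = 0$ and that $Vu \cdot \nu = -u \cdot V\nu$.

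For the identity, I would write $\Pi = I - \nu\nu^\intercal$ and decompose any vector $u \in \mathbb{R}^m$ as $u = \Pi u + (u \cdot \nu)\nu$. Applying the definition of the wedge map to $(\nu \wedge V\nu)u = (\nu \cdot u)(V\nu) - (V\nu \cdot u)\nu$, one sees that $\nu \wedge V\nu$ annihilates $\Pi u$ modulo the $\nu$-direction, whereas $\Pi V \Pi$ annihilates $(u\cdot \nu)\nu$. Matching the tangential parts (those in $\nu^\perp$) is automatic: both sides give $\Pi V \Pi u + (u \cdot \nu)\Pi V \nu$, once one uses $\Pi V (u \cdot \nu)\nu = (u\cdot \nu)\Pi V \nu$. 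The only nontrivial check is the component along $\nu$. Pairing $Vu$ with $\nu$ gives $Vu \cdot \nu = -u \cdot V\nu$ by skew-symmetry of $V$, and this agrees exactly with the $\nu$-component produced by $\nu \wedge V\nu$ acting on $u$. So the decomposition holds.

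For orthogonality in the trace inner product, I would write the wedge in matrix form as $\nu \wedge V\nu = \nu(V\nu)^\intercal - (V\nu)\nu^\intercal$, and compute
$$
\mathrm{Tr}\bigl((\Pi V \Pi)(\nu \wedge V\nu)^\intercal\bigr) = -\mathrm{Tr}\bigl(\Pi V \Pi \nu (V\nu)^\intercal\bigr) + \mathrm{Tr}\bigl(\Pi V \Pi V\nu \, \nu^\intercal\bigr).
$$
The first term vanishes because $\Pi\nu = 0$; the second, after a cyclic permutation, becomes $\nu^\intercal \Pi V \Pi V \nu = (\Pi\nu)^\intercal V \Pi V \nu = 0$ for the same reason.

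I don't foresee any substantial obstacle: the argument is a direct algebraic verification, and the only point requiring care is keeping the sign conventions for $\wedge$ and the trace inner product consistent with those set earlier in the paper.
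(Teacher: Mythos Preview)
Your proposal is correct and matches the paper's approach: both are direct algebraic verifications using $\Pi\nu=0$, $V\nu\cdot\nu=0$, and skew-symmetry, with the paper phrasing the identity via the block decomposition $V=\Pi V\Pi+\Pi^\perp V\Pi+\Pi V\Pi^\perp+\Pi^\perp V\Pi^\perp$ (noting $\Pi^\perp V\Pi^\perp=0$) while you equivalently test against an arbitrary vector. One minor point: with the paper's convention $(a\wedge b)u=(a\cdot u)b-(b\cdot u)a$ the matrix form is $a\wedge b=ba^\intercal-ab^\intercal$, so your expression for $\nu\wedge V\nu$ carries a sign flip, but this is harmless since both trace terms vanish individually.
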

\begin{proof}
Let $\Pi^\perp$ be the orthogonal projection to the line spanned by $\nu$.
Any linear transformation $V$ of $\mathbb{R}^m$ can be written as
$$ V= \Pi V\Pi + \Pi^\perp V\Pi + \Pi V\Pi^\perp + \Pi^\perp V\Pi^\perp. $$
The last term is $0$ if $V$ is skew symmetric and it is easily seen that
$$\Pi V\Pi^\perp w = (w\cdot \nu) \nu, \ \ \Pi^\perp V\Pi w = - (V\nu)\cdot w \nu. $$
The claim now follows from the definition of the cross-product $\wedge$. Orthogonality is an easy verification. 
\end{proof}

It follows from the lemma that if $(u,U)\in \mathfrak{se}(m)$ defines a state at configuration $g=(a,A)$ that satisfies the rolling constraint $u=rU\nu(a)$,
then 
$$ U=\Pi_a U \Pi_a + \frac1r \nu(a)\wedge u.$$
Rather than using $U$ to describe the state (from which we obtain $u$ using the constraint equation), we  
use its tangential part $$S_a:=\Pi_a U\Pi_a$$
and $u$, from which the other components of $U$ can be derived. We will refer to $S_a$ as the body's {\em tangential angular velocity} or {\em tangential spin}. (We give the same name to both $S$ and the related quantity $\mathcal{S}$.)
This leads to the following simple observation.
\begin{proposition}\label{U and S}
Under the no-slip constraint, the angular velocity matrix $U$ satisfies 
$$U=S_a + \nu(a)\wedge \frac{u}{r}$$
where $S_a$ is the tangential spin  at configuration $g=(a,A)$ and $u$ is the   velocity of the center point $a\in \mathcal{N}$. 
\end{proposition}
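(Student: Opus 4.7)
The proposition is a direct consequence of Lemma \ref{lemma decomposition} together with the no-slip constraint, so my plan is essentially a two-step substitution rather than a substantial argument.

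First I would invoke Lemma \ref{lemma decomposition} with $V = U$ and $\nu = \nu(a)$. Since $U \in \mathfrak{so}(m)$ is skew-symmetric, the lemma gives the orthogonal decomposition
\[ U = \Pi_a U \Pi_a + \nu(a) \wedge U\nu(a). \]
By the definition of the tangential spin in the proposition, the first summand is exactly $S_a = \Pi_a U \Pi_a$.

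Next I would use the no-slip (rolling) constraint that defines membership in the rolling bundle $\mathfrak{R}_g$, namely $u = r U \nu(a)$. Solving for $U\nu(a)$ yields $U\nu(a) = u/r$, which lies in $T_a\mathcal{N}$ (as it must, since $U$ is skew-symmetric and so $U\nu(a) \perp \nu(a)$, consistent with $u \in T_a\mathcal{N}$). Substituting into the second summand above gives
\[ U = S_a + \nu(a) \wedge \frac{u}{r}, \]
which is the claim.

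There is no real obstacle here: the proposition simply repackages the content of Lemma \ref{lemma decomposition} in the presence of the rolling constraint. The only points worth verifying are (i) that $S_a$ as defined indeed sends $T_a\mathcal{N}$ to itself and vanishes on $\nu(a)$ (both immediate from the projection factors), and (ii) that the decomposition is consistent in the sense that one can recover $u$ from $U$ via $u = rU\nu(a)$ given that $S_a \nu(a) = 0$ and $(\nu(a) \wedge u/r)\nu(a) = u/r$; this consistency check confirms that the pair $(S_a, u)$ parametrizes the constrained states without redundancy.
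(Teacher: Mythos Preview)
Your proof is correct and follows exactly the paper's own approach: the paper presents this proposition as a ``simple observation'' that follows immediately from Lemma \ref{lemma decomposition} applied to $U$ together with the constraint $u = rU\nu(a)$, which is precisely the two-step substitution you describe. Your additional consistency checks in (i) and (ii) are correct but not needed for the statement itself.
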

 We wish next to rewrite the equation of motion in terms of $S_a$ and $u$ rather than $U$. This requires first relating the time derivative of $S_a$ 
 as a function taking values in $\mathfrak{so}(m)$, and its covariant derivative as a tensor field on $\mathcal{N}$ along paths. 
 \begin{lemma}\label{lemma Sdot}
 Let $a(t)$ be a differentiable curve in the hypersurface $\mathcal{N}\subset \mathbb{R}^m$, write $u=\dot{a}$, and let $S(t):T_{a(t)}\mathcal{N}\rightarrow T_{a(t)}\mathcal{N}$ be a differentiable field of symmetric linear maps along $a(t)$. Let as before $\nu$ denote a unit normal vector field  on $\mathcal{N}$ and $\mathbb{S}$ the shape operator, $\mathbb{S}_a v =- D_v \nu$,  where $D_u$ denotes ordinary directional derivative of vectors in $\mathbb{R}^m$ and  $v\in T_a\mathcal{N}$. Let $\nabla$ denote the Levi-Civita connection on the hypersurface. Then
 $$\dot{S} = \frac{\nabla S}{dt} + \nu(a) \wedge S\mathbb{S}_a u. $$
 \end{lemma}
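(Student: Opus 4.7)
The plan is to verify the identity by applying both sides to arbitrary vectors in the ambient $\mathbb{R}^m$, handling the tangential and normal directions separately. To make $\dot S$ literally a matrix derivative, I would first extend $S(a)$ to an endomorphism of $\mathbb{R}^m$ by declaring $S(a)\nu(a)=0$, so that $t\mapsto S(t)$ is a smooth curve of $m\times m$ matrices along $a(t)$. For the normal direction, differentiating the identity $S(a(t))\nu(a(t))\equiv 0$ in $t$ and using the Weingarten relation $\dot\nu = -\mathbb{S}_a u$ gives $\dot S\,\nu = S\mathbb{S}_a u$; on the right-hand side, $(\nabla S/dt)\nu = 0$ by the extension convention, and the identity $(a\wedge b)v = (a\cdot v)b - (b\cdot v)a$ gives $(\nu\wedge S\mathbb{S}_a u)\nu = S\mathbb{S}_a u$, so the two sides agree on $\nu$.

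For a tangential direction, I would pick $v\in T_{a(0)}\mathcal{N}$ and parallel-transport it along $a(t)$ so that $\nabla v/dt\equiv 0$. Since $Sv$ is then a tangent vector field along $a(t)$, the Gauss formula yields
\[ (Sv)^{\cdot} \;=\; \frac{\nabla(Sv)}{dt} + (\mathbb{S}_a u\cdot Sv)\,\nu \;=\; \frac{\nabla S}{dt}\,v + (\mathbb{S}_a u\cdot Sv)\,\nu, \]
using the Leibniz rule for the induced connection on $\mathrm{End}(T\mathcal{N})$ together with $\nabla v/dt = 0$. Applying Gauss instead to $v$ itself gives $\dot v = (\mathbb{S}_a u\cdot v)\nu$, whence $S\dot v = 0$ because $S\nu = 0$. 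The ambient product rule $(Sv)^{\cdot} = \dot S\,v + S\dot v$ then collapses to
\[ \dot S\,v \;=\; \frac{\nabla S}{dt}\,v + (\mathbb{S}_a u\cdot Sv)\,\nu. \]

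Comparing with the claimed right-hand side applied to the same $v$, namely $(\nabla S/dt)v + (\nu\wedge S\mathbb{S}_a u)v = (\nabla S/dt)v - (S\mathbb{S}_a u\cdot v)\nu$, the identity reduces to the algebraic requirement $\mathbb{S}_a u\cdot Sv = -(S\mathbb{S}_a u)\cdot v$, i.e.\ to $S$ being skew-adjoint for the Euclidean inner product. This is precisely the standing property of the tangential spin $S_a = \Pi_a U \Pi_a$ with $U\in\mathfrak{so}(m)$ used throughout the paper (the word ``symmetric'' in the hypothesis appears to be a slip for ``skew-symmetric''; the normal-direction check in the first paragraph is the only place where the distinction is invisible). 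The hard part is therefore only the sign bookkeeping through the Gauss and Weingarten formulas and recognising where the transpose of $S$ enters at the final step; no curvature identities or deeper intrinsic machinery are needed.
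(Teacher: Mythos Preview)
Your proof is correct and takes a genuinely different route from the paper's. The paper fixes a local orthonormal frame $E_1,\dots,E_{m-1}$ on $\mathcal{N}$, expands $S=\sum_{i<j}s_{ji}\,E_i\wedge E_j$ in the associated basis of $\mathfrak{so}_a(\mathcal{N})$, and computes $D_u(E_i\wedge E_j)$ term by term via the Gauss formula applied to each $E_j$; the identity then drops out by linearity. Your argument is coordinate-free: you extend $S$ to $\mathbb{R}^m$ by $S\nu=0$, test the two sides against $\nu$ and against a parallel tangent field $v$, and reduce the tangential case to the skew-adjointness of $S$. The frame expansion has the advantage that skew-symmetry is built into the representation $S=\sum s_{ji}\,E_i\wedge E_j$ from the start, so the proof never needs to invoke it explicitly; your approach, by contrast, isolates exactly where and why skew-symmetry is needed, which is how you caught the slip in the hypothesis (``symmetric'' should indeed read ``skew-symmetric''---the paper's own proof uses the $E_i\wedge E_j$ expansion, which only makes sense for skew maps). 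Your extension convention $(\nabla S/dt)\nu=0$ is consistent with the paper's implicit one, since $\nabla_u(E_i\wedge E_j)$ also annihilates $\nu$.
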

 \begin{proof}
 Let us introduce a local orthonormal frame of differentiable vector fields on $\mathcal{N}$: $E_1, \dots, E_{m-1}$. Then
 $\{E_i\wedge E_j: 1\leq i< j\leq m-1\}$ is a basis of $\mathfrak{so}_a(\mathcal{N})$ and we may write
 $$S= \sum_{i<j} s_{ji}E_i\wedge E_j. $$
Using inner product notation $\langle\cdot, \cdot\rangle$ for the standard dot product,
 $$ \dot{E}_j = D_uE_j = \sum_{i=1}^{m-1}\langle E_i, D_uE_j\rangle E_i + \langle \nu(a), D_uE_j\rangle \nu(a) =\nabla_uE_j -  \langle D_u\nu(a), E_j\rangle \nu(a).$$
 Since $\mathbb{S}_a$ is symmetric, $-  \langle D_u\nu(a), E_j\rangle= \langle \mathbb{S}_a u, E_j\rangle=\langle u, \mathbb{S}_a E_j\rangle $ and  we obtain 
 $$D_u{E}_j = \nabla_u E_j +\langle u, \mathbb{S}_a E_j\rangle  \nu(a). $$
 It follows that
 \begin{align*}D_u(E_i\wedge E_j)&= \left(\nabla_uE_i + \langle u,\mathbb{S}_a E_i\rangle \nu(a)\right)\wedge E_j  + E_i\wedge\left(\nabla_uE_j  + \langle u,\mathbb{S}_a E_j\rangle \nu(a)\right)\\
 &=\nabla_u \left(E_i\wedge E_j\right) + \nu(a)\wedge \left( \langle \mathbb{S}_au, E_i\rangle E_j  -  \langle \mathbb{S}_au, E_j\rangle E_i  \right)\\
 &=\nabla_u \left(E_i\wedge E_j\right)  + \nu(a) \wedge \left[\left(E_i\wedge E_j\right) \mathbb{S}_a u\right].
 \end{align*}
 Finally,
 \begin{align*}\dot{S} &= \sum_{i<j} \left[\dot{s}_{ji} E_i\wedge E_j + s_{ji} D_u(E_i\wedge E_j)\right]\\
 &= \sum_{i<j} \left[\dot{s}_{ji} E_i\wedge E_j + s_{ji} \nabla_u(E_i\wedge E_j)\right] +\nu(a) \wedge \sum_{i<j} s_{ji} E_i\wedge E_j \mathbb{S}_a u\\
 &=\frac{\nabla S}{dt} + \nu(a)\wedge S\mathbb{S}_a u
 \end{align*}
 as claimed.
 \end{proof}

  Notational simplification is achieved by introducing the following linear map, which we still call (tangential) {\em spin}:
 $$ \mathcal{S} = r\eta S$$
 where $\eta=\gamma/\sqrt{1+\gamma^2}.$ One  advantage of $\mathcal{S}$ over $S$ is  that the kinetic energy of a  state (satisfying the
 no-slip constraint) represented by $(\mathcal{S}, u)$ becomes, up to a multiplicative constant, $|u|^2 + \frac12 \text{Tr}(\mathcal{S}\mathcal{S}^\intercal)$.
 The final equation of motion will also take a more symmetric form.

  We are now ready to express the equation of motion in terms of the spin matrix $\mathcal{S}$ and the center velocity $u$,
  eliminating any reference to the non-holonomic constraint condition.   The proof 
  of this proposition also yields the proof of Theorem \ref{Newton}.
 \begin{proposition}\label{roll equations proposition}
The rolling motion with  hypersurface (of ball  centers) $\mathcal{N}$ under the no-slip constraint   satisfies the system of equations 
\begin{align*}
\frac{\nabla u}{dt} &= -\eta \mathcal{S} \mathbb{S}_a u \\
\frac{\nabla \mathcal{S}}{dt} &= \eta \left(\mathbb{S}_a u\right) \wedge u  
\end{align*}
where $u=\dot{a}\in T_a\mathcal{N}$ is the velocity of the center of the ball and $\mathcal{S}$ is the tangential spin. 
Here $\nabla$ is the ordinary Levi-Civita connection of the hypersurface with the   Riemannian metric induced by restriction of the dot product in $\mathbb{R}^m$.
When the moment of inertia is zero ($\eta=0$) the system reduces to geodesic motion on $\mathcal{N}$  with parallel tangential spin. 
 \end{proposition}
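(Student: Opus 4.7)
The plan is to substitute the decomposition $U = S + \frac{1}{r}\nu(a) \wedge u$ from Proposition \ref{U and S} into the rolling equation $\dot{U} = -\frac{r}{1+\gamma^2}\bigl(U\mathbb{S}_a U\nu(a)\bigr)\wedge \nu(a)$ provided by Proposition \ref{propo_roll_equation}, and then to separate the resulting $\mathfrak{so}(m)$-valued identity into its two orthogonal components via the decomposition $V = \Pi_a V\Pi_a + \nu(a)\wedge V\nu(a)$ of Lemma \ref{lemma decomposition}. Each component should produce exactly one of the two equations in the proposition.

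First I would simplify the right-hand side. The non-holonomic constraint $U\nu(a)=u/r$ reduces $U\mathbb{S}_a U\nu(a)$ to $\tfrac{1}{r}U\mathbb{S}_a u$; expanding $U = S + \tfrac{1}{r}\nu\wedge u$ and noting that $\mathbb{S}_a u \in T_a\mathcal{N}$ gives $(\nu\wedge u)\mathbb{S}_a u = -(u\cdot \mathbb{S}_a u)\nu$, i.e.\ a purely normal vector. Wedging with $\nu(a)$ annihilates that term (since $\nu\wedge\nu=0$), so the whole right-hand side collapses to $\tfrac{1}{1+\gamma^2}\,\nu(a)\wedge S\mathbb{S}_a u$.

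Next I would compute $\dot{U}$ directly from $U = S + \tfrac{1}{r}\nu\wedge u$. Using Lemma \ref{lemma Sdot} to rewrite $\dot{S} = \tfrac{\nabla S}{dt} + \nu\wedge S\mathbb{S}_a u$, together with $\dot{\nu}=-\mathbb{S}_a u$ and the observation that $\nu\wedge \dot{u} = \nu\wedge \tfrac{\nabla u}{dt}$ (the normal part of $\dot{u}$ is killed by $\nu\wedge$), I obtain an expression that splits cleanly under Lemma \ref{lemma decomposition}: the $\Pi_a V\Pi_a$ block consists of $\tfrac{\nabla S}{dt} - \tfrac{1}{r}\mathbb{S}_a u \wedge u$ (noting that $\nabla S/dt$ and any $\wedge$ of two tangential vectors both sit in this block), while the $\nu\wedge V\nu$ block consists of $\nu\wedge S\mathbb{S}_a u + \tfrac{1}{r}\nu\wedge \tfrac{\nabla u}{dt}$. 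Equating blocks with the simplified right-hand side, the $\Pi V\Pi$ part gives $\tfrac{\nabla S}{dt} = \tfrac{1}{r}\mathbb{S}_a u \wedge u$, which upon multiplying by $r\eta$ becomes $\tfrac{\nabla\mathcal{S}}{dt} = \eta\,(\mathbb{S}_a u)\wedge u$; the $\nu\wedge V\nu$ part, after cancelling the common $\nu\wedge S\mathbb{S}_a u$ and using injectivity of $w \mapsto \nu\wedge w$ on $T_a\mathcal{N}$, yields $\tfrac{\nabla u}{dt} = -\tfrac{r\gamma^2}{1+\gamma^2}\,S\mathbb{S}_a u = -\eta\,\mathcal{S}\mathbb{S}_a u$ via $\gamma^2/(1+\gamma^2) = \eta^2$ and $\mathcal{S} = r\eta S$.

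The only mildly delicate step is keeping track of the antisymmetry of $\wedge$ and verifying that each term indeed lies in the block to which Lemma \ref{lemma decomposition} assigns it: that $\nabla S/dt$ sends $\nu(a)$ to $0$ (it is the covariant derivative of a tangential tensor), that $\mathbb{S}_a u \wedge u$ annihilates $\nu(a)$ (both factors are tangential), and that $\nu\wedge S\mathbb{S}_a u$ and $\nu\wedge \nabla u/dt$ are genuinely of the form $\nu\wedge(\textrm{tangential})$. Once the orthogonal splitting is in hand, the two equations fall out with no further work, and the $\eta=0$ statement is immediate: both right-hand sides vanish, so $u$ satisfies the geodesic equation on $\mathcal{N}$ and $\mathcal{S}$ is parallel along it.
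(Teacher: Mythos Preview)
Your proof is correct and follows essentially the same route as the paper: substitute the decomposition $U = S + \tfrac{1}{r}\nu\wedge u$ from Proposition \ref{U and S} into the equation of Proposition \ref{propo_roll_equation}, use Lemma \ref{lemma Sdot} to express $\dot{S}$, and then separate the resulting identity via Lemma \ref{lemma decomposition}. If anything, your organization is slightly cleaner: where the paper derives $\nabla u/dt$ by separately differentiating the constraint $u = rU\nu$ and then computes $\nabla S/dt$ by comparing two expressions for $\dot{U}$, you obtain both equations in one stroke by projecting onto the two orthogonal blocks of $\mathfrak{so}(m)$.
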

 \begin{proof}
The proof simply amounts to rewriting the terms of the main equation of Proposition 
\ref{propo_roll_equation} first using  $S$ and then $\mathcal{S}$. We limit ourselves here to listing the expressions already obtained that go into the rewriting.  First note that Proposition \ref{U and S}   gives
\begin{equation}\label{equation 1} \left(U\mathbb{S}_a U \nu(a)\right)\wedge \nu(a) =-\frac1r  \nu(a) \wedge S\mathbb{S}_a u.\end{equation}
From this and Proposition \ref{propo_roll_equation}  we find
\begin{equation}\label{equation 2}
\dot{U}= F(a) +\frac1{1+\gamma^2} \nu(a) \wedge S\mathbb{S}_a u.
\end{equation}
Differentiating the constraint equation $u= rU\nu(a)$ in $t$ gives $$\dot{u}= r\dot{U} \nu(a)- rU\mathbb{S} u.$$ This and the main equation of Proposition
\ref{propo_roll_equation} yield the relation
\begin{equation}\label{equation 3} \frac{\nabla u}{dt} = r F(a)\nu(a) - r\eta^2 S\mathbb{S}_a u.\end{equation}
Proposition \ref{U and S} and Lemma \ref{lemma Sdot} yield
\begin{equation}\label{equation 4}
\dot{U} = \frac{\nabla S}{dt} +\frac1{1+\gamma^2} \nu(a) \wedge S\mathbb{S}_au+ \frac{u}r \wedge \mathbb{S}_a u  + \nu(a)\wedge F(a)\nu(a).
\end{equation}
Combining this   with Equation (\ref{equation 2}) results in
\begin{equation}\label{equation 5}
\frac{\nabla S}{dt} = -\frac{u}{r}\wedge \mathbb{S}_au + F(a)-\nu\wedge F(a)\nu(a) =  -\frac{u}{r}\wedge \mathbb{S}_au + \Pi_a F(a) \Pi_a.
\end{equation}
Closer inspection shows that 
 Equations (\ref{equation 2}) and (\ref{equation 5}) are, in fact,  equivalent to the main equation of Proposition \ref{propo_roll_equation} together with
the constraint equation.
The definition of   $\mathcal{S}$ finally  gives the wished for equations. 
 \end{proof}
 
 It is worthwhile writing down the rolling equations in the special case of dimension $m=3$. In this case,  $\mathcal{N}$ is a surface and
  $\mathfrak{so}(\mathcal{N})$ is a  trivial line bundle. Let $J_a$ denote rotation on $T_a\mathcal{N}$ counterclockwise (given the  orientation defined by $\nu(a)$) by $\pi/2$. In terms of a local, positive, orthonormal frame of vector fields $E_1, E_2$, we have $J=E_1\wedge E_2$.
  Then the tangential spin matrix has the form $\mathcal{S}=\mathcal{s} J$. Note that $J$ is a parallel tensor,  $\nabla J=0$. It makes sense to express the rolling equations on the product manifold $M=\mathcal{N}\times \mathbb{R}$ where   $\vartheta$ given by the second factor may be interpreted  
  as an overall amount of rotation that is related to $\mathcal{s}$ through the equation $\mathcal{s}=\dot{\vartheta}$. Notice that we are redefining the symbol $M$, which previously was used to represent the configuration manifold of the rolling ball. In this new $M$,   the no-slip constraint is already built in. The rolling equations (in the absence of forces) take now the form
  \begin{equation}\label{dim 3}\frac{\nabla u}{dt}= -\eta \mathcal{s} J \mathbb{S}_a u, \ \ \dot{\mathcal{s}}=\eta \alpha(\mathbb{S}_a u, u) \end{equation}
 where $(a,\vartheta)\in M$, $(u,\mathcal{s})=(\dot{a},\dot{\vartheta})$, and $\alpha$ is the area $2$-form on $\mathcal{N}$. Note that
 $\alpha =\epsilon_1\wedge \epsilon_2$ where $\epsilon_i$, $i=1,2$,  is the dual frame and $\wedge$ is here the standard wedge product of differential forms.

 \section{Pancake surfaces and the billiard limit} \label{bill limit section}

 Let $P$  be an open, connected subset of  $\mathbb{R}^{k}$ with piecewise  smooth (manifold) boundary $P_0=\partial P$; it is imbedded in $\mathbb{R}^{k+1}$ so that
 the last coordinate of each of its points equals $0$.    Then $P_0$ is a codimension $2$ imbedded submanifold of $\mathbb{R}^{k+1}$. We will refer to $P$ 
 as the flat {\em plate} and $P_0$ as the {\em edge} of the plate.
  The motion of the rolling ball around the edge $P_0$ is not in general easy to describe because the curvature of $P_0$ influences the rolling trajectory in subtle ways.  For example, when $P$ is a disc in $\mathbb{R}^3$,    the  ball may roll  all the way to the other side of $P$
 upon reaching 
 the circular edge 
  or it may turn around midway and return to the same side from which it started, depending on the choice of initial conditions.
  Our goal here is to show that, in the limit as the radius approaches $0$, 
orbit behavior   is fully described by the straight edge Example \ref{around edge}.  This  is  natural since a very small ball  should not     feel  the curvature of $P_0$.  (We assume  the ball's trajectory does not go through points where $P_0$ is not  smooth.)
The end result is a billiard system in which the reflection map $(u^-, W^-)\mapsto (u+, W^+)$ is as described in Example \ref{around edge}.
The proof is not entirely straightforward, however, because the equations of motion become singular as $r\rightarrow 0$, and the limit needs to be taken with some care. The main result is contained in the   theorem stated after the following definition. For simplicity, we assume from here on that
$P_0$ is smooth.
  
      \begin{figure}[htbp]
\begin{center}
\includegraphics[width=4in]{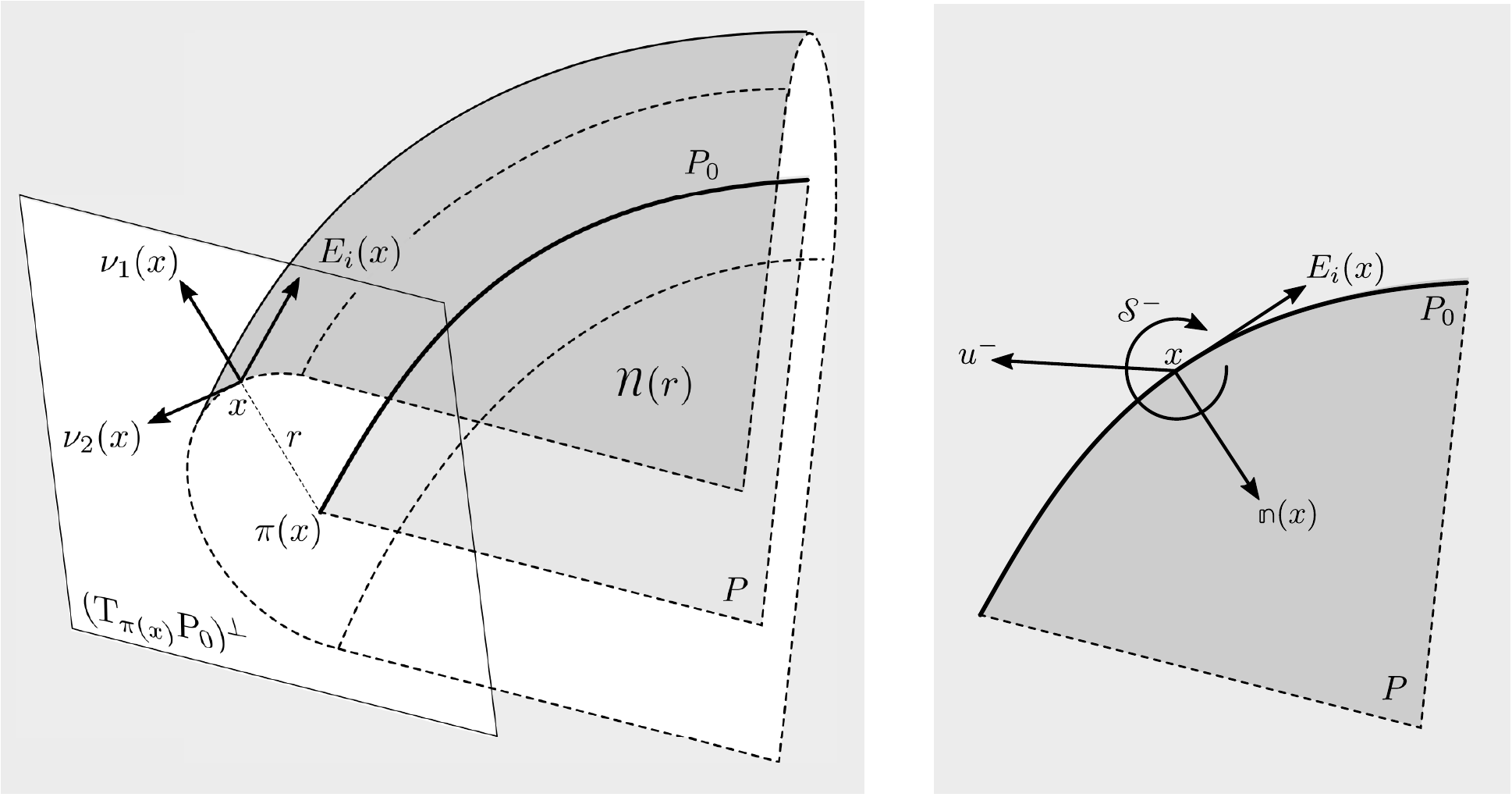}\ \ 
\caption{{\small On the left-hand side: hypersurface $\mathcal{N}=\mathcal{N}(r)$ associated to the manifold $P$. On the right-hand side: in the limit $r\rightarrow 0$,
one obtains a (non-holonomic) billiard system with collision map $\mathcal{C}: (u^-,W^-)\mapsto (u^+,W^+)$, where
the $W^\pm$ are related to the $\mathcal{S}^\pm$ as explained in Example \ref{around edge}.  }}
\label{tube}
\end{center}
\end{figure}

  \begin{definition}[The non-holonomic collision map]\label{coll map}
  Let $P$ be an $k$-dimensional flat plate in $\mathbb{R}^{k+1}$ with    smooth manifold boundary $P_0$. At any 
  $x\in P_0$ we define the vector space $V_x=T_xP_0\oplus T_xP_0\oplus \mathbb{R} \mathbbm{n}(x)$ consisting of vectors
  $(W, u_0, u^\perp)$, where $u=u_0+u^\perp\in T_xP$ and $\mathbbm{n}(x)$ is the inward pointing unit normal vector to $P_0$. (See Figure \ref{tube}.) Then 
    the {\em non-holonomic collision map}  $\mathcal{C}_x:V_x\rightarrow V_x$ is defined as
    $$ \mathcal{C}_x(W,u_0,u^\perp)= \left(\sin(\pi\eta)  u_0 -\cos(\pi\eta) W,  \cos(\pi\eta)  u_0 + \sin(\pi\eta) W, 
     - u^\perp\right).$$
     Note that, when $\eta=0$, $\mathcal{C}_x$ reflects $u$ specularly and reverses the sign of $W$. 
  \end{definition}

In the above definition, we interpret $u\in T_xP$ as the velocity of a point-mass and $W$ as the essential components of a tangential angular 
velocity tensor $\mathcal{S}$ in the following sense: $W=\mathcal{S}\mathbbm{n}(x)$. This gives rise to a billiard system in which the point particle 
possesses, in addition to its velocity, a kind of spin velocity  that affects the result of the collision with the boundary of $P$. 
It is natural to think that the particle is an infinitesimal rolling ball  with rotationally symmetric mass distribution whose moment of inertia is specified by  $\eta$. We recall that  the moment of inertia parameters $\eta$ and $\gamma$ do not depend on the radius of the ball.

  \begin{theorem}\label{limit r proposition}
  Let $P$ be an  $k$-dimensional  flat plate   in $\mathbb{R}^{k+1}$ with smooth boundary $P_0$ whose principal curvatures (as a hypersurface in $P$) are uniformly bounded. In the limit when the radius of the ball approaches $0$, solutions of the rolling ball equation   have the following description:
  On $P\setminus P_0$ the point-mass moves with constant velocities  $u$ and $\mathcal{S}$; upon reaching a boundary point $x\in P_0$, the vector $(u,W)\in T_xP\oplus  T_xP_0$, where $W=\mathcal{S}\mathbbm{n}(x)$, 
 undergoes a reflection   according to the non-holonomic collision map $\mathcal{C}_x$ of Definition \ref{coll map}. 
  \end{theorem}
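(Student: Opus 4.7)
The motion away from the rounded edge is covered by Example \ref{cod1}: on either flat sheet of $\mathcal{N}$ (the top and bottom faces of the pancake), the rolling equations reduce to $\dot u=0$ and $\dot{\mathcal{S}}=0$, so $u$ and $\mathcal{S}$ stay constant between encounters with the curved region above $P_0$. The content of the theorem is therefore entirely about what happens during one passage around the half-tube above $P_0$, and the plan is to reduce that passage, in the limit $r\to 0$, to the straight-edge analysis of Example \ref{around edge}.

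I would work in tubular neighborhood coordinates around a base point $x_0\in P_0$, parametrizing the rounded portion of $\mathcal{N}$ by $(y,\theta)$ with $y\in P_0$ and $\theta\in[0,\pi]$, where the corresponding point on $\mathcal{N}$ is $y+r\cos\theta\,\mathbbm{n}(y)+r\sin\theta\,\mathbbm{e}$ (with $\mathbbm{e}$ the unit normal to $P$ in $\mathbb{R}^{k+1}$). In these coordinates the induced metric on $\mathcal{N}$ and the shape operator $\mathbb{S}$ can be computed explicitly: $\mathbb{S}$ has a single eigenvalue $-1/r$ in the $\partial_\theta$ direction, while its eigenvalues along $P_0$ are the principal curvatures of $P_0$ pulled back to $\mathcal{N}$, and these are uniformly bounded in $r$ by hypothesis. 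I would then split $u$ and $\mathcal{S}$ according to the splitting $T\mathcal{N}=\mathrm{span}(\partial_\theta)\oplus TP_0$ in the spirit of Example \ref{cod2}.

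The main step is to write Proposition \ref{roll equations proposition} in these coordinates and isolate the stiff term coming from the $-1/r$ eigenvalue of $\mathbb{S}$ from the bounded terms coming from the curvatures of $P_0$. Rescaling time by $t=r\tau$ makes the stiff equations regular in $r$: the $\mathbb{S}$-driven part becomes $\tau$-independent of $r$ and reproduces precisely the system solved in Example \ref{around edge} (with $\mu=u\cdot E$ conserved to leading order and the passage time in $\tau$ equal to $\pi/|\mu|+O(r)$), while the remaining terms, being $O(1)$ in the original time and the passage lasting an original time $O(r)$, contribute an $O(r)$ correction to both $u$ and $\mathcal{S}$ over the traversal. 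A Gronwall estimate on the rescaled system then gives continuous dependence on $r$, and taking $r\to 0$ one recovers the orthogonal transformation of Equation (\ref{billiard interpretation}) acting on $(\bar u,W)$, while the base point $y\in P_0$ itself drifts by $O(r)$ and becomes stationary in the limit. Identifying $\bar u = u_0$, noting that the sign of the $\mathbbm{n}(x)$-component of $u$ is reversed because the passage goes from one sheet to the other, and observing that $\Pi_x\mathcal{S}\Pi_x$ is a conserved quantity of the straight-edge system (the constants $(\mathcal{S}E_i)\cdot E_j$ of Example \ref{cod2}), one gets exactly the collision map $\mathcal{C}_x$ of Definition \ref{coll map}.

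The principal obstacle is the stiffness: the shape operator blows up as $1/r$, so naive convergence arguments fail and one cannot simply pass to the limit in the rolling equations. The technical heart of the proof is the rescaling $t=r\tau$ together with the careful bookkeeping showing that all curvature contributions from $P_0$ enter only multiplied by positive powers of $r$ after rescaling, so that the rescaled system depends continuously on $r$ down to $r=0$, where the equations coincide with those of a straight edge. Once this singular limit is justified, the identification of the limiting map with $\mathcal{C}_x$ is a direct translation of Example \ref{around edge}.
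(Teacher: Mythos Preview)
Your proposal is essentially the same approach as the paper's: both rescale time by a factor of $r$ so that the singular eigenvalue $-1/r$ of $\mathbb{S}$ becomes $-1$ while the $P_0$-curvature contributions become $O(r)$, and both identify the resulting $r=0$ system with the straight-edge problem of Example~\ref{around edge}. The only cosmetic difference is that the paper packages the normalization as a spatial homothety $x\mapsto x/r$ (together with an energy-normalized time change $\tau=ct/r$), whereas you achieve the same effect via tubular coordinates $(y,\theta)$; your proposed Gronwall estimate is, if anything, a more explicit justification of the convergence than the paper's somewhat informal term-by-term argument.
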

 \begin{proof}
To begin, let us assume that the radius $r$ of the ball is sufficiently small so that the map $\pi:\mathcal{N}(r)=\mathcal{N} \rightarrow P$ that associates to each $x\in \mathcal{N}(r)$ the closest point in $P$ is well defined.  This is possible due to the assumption that the principal curvatures of $P_0$
are bounded. The hypersurface $\mathcal{N}$
is piecewise smooth and consists of the union of two parallel copies of $P$,  lying $2r$ apart from each other,  and half the boundary of the tube of radius $r$ centered around $P_0$.
We call the two copies of $P$ the two {\em sheets} of $\mathcal{N}$ and the half-tube the {\em curved part} of $\mathcal{N}$.

Let  $x, u, \mathcal{S}$ be initial conditions for the rolling equation, where $x\in \mathcal{N}$ is a point on the interface
where the curved part of $\mathcal{N}$  
meets the flat sheets. Note that this interface is the union of two  diffeomorphic copies of $P_0$.  
Here   $u\cdot \mathbbm{n}(x)<0$,
so the center  velocity $u$ points towards the curved part of $\mathcal{N}$); $\mathcal{S}$ is the tangential angular velocity tensor.  Set $W:=\mathcal{S}\mathbbm{n}(x)$.  
Let $u(t)=\dot{x}(t)$ and $\mathcal{S}(t)$ satisfy the rolling equations
$$ \frac{\nabla u}{dt} = -\eta \mathcal{S} \mathbb{S}_x u, \ \ \frac{\nabla \mathcal{S}}{dt} =\eta (\mathbb{S}_xu)\wedge u$$
with the given initial conditions. We follow the solution from time $0$ till the moment (if it happens) when the ball reaches the interface submanifold again.

The shape operator $\mathbb{S}$ naturally becomes singular as $r$ approaches $0$. In fact, on the intersection of the curved part of $\mathcal{N}$ with
the $2$-plane perpendicular to $T_{\pi(x)}P_0$ (see the left-hand side of Figure \ref{tube}), the principal curvature is $-1/r$. We call this intersection the
{\em meridian of $\mathcal{N}$} at $x$. 
This produces a discontinuity
of velocities at the limit. It is also to be expected that the duration of the rolling on the curved part of $\mathcal{N}$ approaches $0$ in the limit. With these issues in mind, we transform the original equations of motion by making a time change and   applying an appropriate homothety. The resulting 
system will be of the kind considered in Example \ref{around edge} (giving the rolling of a finite radius ball on a straight edge). 

Here are some of the details.
Let $c^2$ be the square norm of $(u,\mathcal{S})$, 
a quantity proportional to the energy of the initial condition.
 Introduce a new time given by $\tau=\frac{c}{r}t$ and define the homothety $h:x\in\mathbb{R}^m\rightarrow x/r\in \mathbb{R}^m$.
 Let $\overline{\mathcal{N}}$ be the image of $\mathcal{N}(r)$ under $h$, appropriately translated so the projection $\pi(x)$ of the initial point
 lies at the origin.  Note that, as $r$ approaches $0$, $\overline{\mathcal{N}}$ looks increasingly like the straight edge situation of Example \ref{around edge}.
Now define
$$\bar{x}(\tau) = h(x(ct/r)), \ \ \bar{\mathcal{S}}(\tau) = h\left(\frac{r}{c} \mathcal{S} (ct/r)\right). $$
For any given value of $r$, the rolling equations turn into
$$ \frac{\nabla \bar{u}}{d\tau} = -\eta \bar{\mathcal{S}} \bar{\mathbb{S}}_{\bar{x}} \bar{u}, \ \  \frac{\nabla\bar{\mathcal{S}}}{d\tau}= \eta\left(\bar{\mathbb{S}}_{\bar{x}}\bar{u}\right)\wedge \bar{u}.$$
where the new shape operator $\mathbb{S}$ at $\bar{x}(\tau)$ equals $r\mathbb{S}$ at $x(ct/r)$. The norm of the new initial velocities $(\bar{u}, \bar{\mathcal{S}})$ is $1$ for all $r$.
The  principal curvature  on the meridian circles becomes $-1$ for all $r$, and the other principal curvatures  approach $0$. In the limit,
this shape operator becomes $-E^\flat\otimes E$ where $E$ is a unit vector field tangent to the meridian circle and $E^\flat$ is its dual vector relative
to the dot-product.  
 
The meridian circles are geodesics so $\nabla_EE=0$,  and $E$ has constant norm, so
 $E\cdot \nabla_{v} E=0$ for any tangent vector $v$. Writing  $\bar{u}^\perp$ for the  component of $\bar{u}$ perpendicular to $E$, we obtain
 $\bar{u}\cdot \nabla_{\bar{u}}E =  \bar{u}^\perp \cdot \nabla_{\bar{u}^\perp}E.$
Then, using the equations of motion,
$$ \frac{d}{d\tau} \bar{u}\cdot E= \frac{\nabla \bar{u}}{d\tau}\cdot E + \bar{u}\cdot \nabla_{\bar{u}}E= -\eta E\cdot \left(\bar{\mathcal{S}} \bar{\mathbb{S}}\bar{u}\right) + \bar{u}^\perp\cdot \nabla_{\bar{u}^\perp}E.$$
As $r$ approaches $0$, $\bar{\mathbb{S}}\bar{u}$ converges to a vector parallel to $E$; since $\bar{\mathcal{S}}$ is skew-symmetric, the term
$E\cdot \left(\bar{\mathcal{S}} \bar{\mathbb{S}}\bar{u}\right)$ approaches $0$. Notice that $E$ is normal to the
isometric copies of the rescaled $P_0$, so the quantity $\nabla_{\bar{u}^\perp}E$ is the negative of the shape operator of this submanifold. 
Thus the term $\bar{u}^\perp\cdot \nabla_{\bar{u}^\perp}E$ also approaches $0$ due to the assumption that the principal curvatures of $P_0$ are
 bounded. 
The conclusion is that, in the limit, $\bar{u}\cdot E=\mu$ is a constant of motion and, 
in any fixed neighborhood of the initial (rescaled) point, the hypesurface $\overline{\mathcal{N}}$ approaches that  of
Example \ref{around edge}.   By introducing an orthonormal frame $E_1, \dots, E_{m-2}$ of parallel vector fields tangent to the rescaled (and straightened) $P_0$,  and using $\bar{\mathbb{S}}E_i=0$, we obtain the system of equations
$$\frac{d}{dt}(E_i\cdot \bar{u})=\eta \mu (E_i\cdot\bar{\mathcal{S}} E), \ \ \frac{d}{dt}(E_i\cdot \bar{\mathcal{S}}E)=-\eta \mu E_i\cdot \bar{u}.$$ 
But these are precisely the equations of Example \ref{around edge}. By reversing the rescaling on velocities we obtain from the conclusion of that example the
collision map $\mathcal{C}$ we are after.
 \end{proof}


\begin{thebibliography}{Abcdef}

\bibitem{A}
\newblock V.~I. Arnol'd 
\newblock \emph{Small denominators and problems of stability of motion in classical and celestial mechanics}, 
\newblock Russian Math. Surveys 18 (1963).


\bibitem{Arnold}
\newblock V.~I. Arnol'd, S.~P. Novikov (Eds.)
\newblock \emph{Dynamical Systems VII}, 
\newblock Encyclopaedia of Mathematical Sciences, Vol. 16,  Springer, 1994.



\bibitem{bloch}
\newblock A.~M. Bloch,
\newblock \emph{Nonholonomic mechanics and control}, 
\newblock vol.~24 of Interdisciplinary Applied Mathematics, 2nd edition, Springer, New York, 2015.

\bibitem{B}
\newblock M. Brin
\newblock \emph{Ergodic Theory of Frame Flows}, 
\newblock A. Katok (ed), {\em Ergodic Theory and Dynamical Systems II},
\newblock Progress in Mathematics, vol 21, pp. 163-183, 1982.

\bibitem{BG}
\newblock M. Brin and M. Gromov
\newblock \emph{On the Ergodicity of    Frame Flows}, 
\newblock Invent. Math., v. 60 (1980), pp. 1-7.



\bibitem{BKM}
\newblock A.~V. Borisov, A.~A. Kilin, and I.~S. Mamaev,
\newblock \emph{On the Model of Non-holonomic Billiard},
\newblock {Regul. Chaotic Dyn.}, \textbf{6} (2011), 653-662.


\bibitem{borisov}
\newblock A.~V. Borisov, I.~S. Mamaev and A.~A. Kilin,
\newblock \emph{Rolling of a ball on a surface. {N}ew integrals and hierarchy of
  dynamics},
\newblock {Regul. Chaotic Dyn.}, \textbf{7} (2002), 201--219.

\bibitem{bgutkin}
\newblock D.~S. Broomhead and E.~Gutkin,
\newblock \emph{The dynamics of billiards with no-slip collisions},
\newblock {Phys. D}, \textbf{67} (1993), 188--197.

\bibitem{CCCF}
\newblock T. Chumley, S. Cook, C. Cox, R. Feres,
\newblock \emph{Rolling and no-slip bouncing in cylinders},
\newblock Journal of Geometric Mechanics, 12 (1) 2020, 53-84.

\bibitem{CFII}
\newblock C.~Cox and R.~Feres,
\newblock No-slip billiards in dimension two,
\newblock in \emph{Dynamical systems, ergodic theory, and probability: in
  memory of {K}olya {C}hernov}, vol. 698 of Contemp. Math.,
 Amer. Math. Soc., Providence, RI, 2017, 91--110.

\bibitem{CFZ}
\newblock C.~Cox, R.~Feres and H.-K. Zhang,
\newblock \emph{Stability of periodic orbits in no-slip billiards},
\newblock {Nonlinearity}, \textbf{31} (2018), 4443--4471.

\bibitem{CF}
\newblock C.~Cox and R.~Feres,
\newblock \emph{Differential geometry of rigid bodies collisions and non-standard
  billiards},
\newblock {Discrete Contin. Dyn. Syst.} A, \textbf{36} (2016), 6065--6099.


\bibitem{Garwin}
\newblock R.~L. Garwin,
\newblock \emph{Kinematics of an ultraelastic rough ball},
\newblock {American Journal of Physics}, \textbf{37} (1969), 88--92.



\bibitem{hefner}
\newblock B.~T. Hefner
\newblock \emph{The kinematics of a superball bouncing between two vertical surfaces},
\newblock {American Journal of Physics}, \textbf{72},  (2004), 831-856.

\bibitem{K}
\newblock M. Kourganoff
\newblock \emph{Anosov Geodesic Flows, Billiards and Linkages},
\newblock {Commun. Math. Phys.}, \textbf{344},  (2016), 875-883.

\bibitem{LLM} 
\newblock H.~Larralde, F.~Leyvraz and C.~Mej\'{\i}a-Monasterio,
\newblock \emph{Transport properties of a modified {L}orentz gas},
\newblock {J. Statist. Phys.}, \textbf{113} (2003), 197--231.


\bibitem{MLL}
\newblock C.~Mej\'{\i}a-Monasterio, H.~Larralde and F.~Leyvraz,
\newblock \emph{Coupled normal heat and matter transport in a simple model system},
\newblock {Phys. Rev. Lett.}, \textbf{86} (2001), 5417--5420.




\bibitem{W}
\newblock M.~P. Wojtkowski,
\newblock  \emph{The system of two spinning disks in the torus},
\newblock {Phys. D}, \textbf{71} (1994), 430--439.


\end{thebibliography}
 \end{document}